\let\origsection=\section \def\section{\@ifstar{\origsection*}{\mysection}}
\def\mysection{\@startsection{section}{1}\z@{.7\linespacing\@plus\linespacing}{.5\linespacing}{\normalfont\scshape\centering\S}}
\renewcommand{\PrintDOI}[1]{\doi{#1}}
\numberwithin{equation}{section}
\numberwithin{figure}{section}
\let\polishlcross=\l
\def\l{\ifmmode\ell\else\polishlcross\fi}
\let\emptyset=\varnothing
\let\setminus=\smallsetminus
\def\moverlay{\mathpalette\mov@rlay}
\def\mov@rlay#1#2{\leavevmode\vtop{   \baselineskip\z@skip \lineskiplimit-\maxdimen
		\ialign{\hfil$\m@th#1##$\hfil\cr#2\crcr}}}
\newcommand{\charfusion}[3][\mathord]{
	#1{\ifx#1\mathop\vphantom{#2}\fi
		\mathpalette\mov@rlay{#2\cr#3}
	}
	\ifx#1\mathop\expandafter\displaylimits\fi}
\newcommand{\dcup}{\charfusion[\mathbin]{\cup}{\cdot}}
\DeclareFontFamily{U}  {MnSymbolC}{}
\DeclareSymbolFont{MnSyC}         {U}  {MnSymbolC}{m}{n}
\DeclareFontShape{U}{MnSymbolC}{m}{n}{
	<-6>  MnSymbolC5
	<6-7>  MnSymbolC6
	<7-8>  MnSymbolC7
	<8-9>  MnSymbolC8
	<9-10> MnSymbolC9
	<10-12> MnSymbolC10
	<12->   MnSymbolC12}{}
\DeclareMathSymbol{\powerset}{\mathord}{MnSyC}{180}
\newcommand{\qedge}[7]{
	
	\ifx\relax#4\relax
	\def\qoffs{0pt}
	\else
	\def\qoffs{#4}
	\fi
	
	\def\qhedge{
		($#1+#3!\qoffs!-90:#2-#3$) --
		($#2+#1!\qoffs!-90:#3-#1$) --
		($#3+#2!\qoffs!-90:#1-#2$) -- cycle}

	\coordinate (12) at ($#1!\qoffs!90:#2$);
	\coordinate (13) at ($#1!\qoffs!-90:#3$);
	\coordinate (23) at ($#2!\qoffs!90:#3$);
	\coordinate (21) at ($#2!\qoffs!-90:#1$);
	\coordinate (31) at ($#3!\qoffs!90:#1$);
	\coordinate (32) at ($#3!\qoffs!-90:#2$);
	
	\def\nqhedge{
		(13) let \p1=($(13)-#1$), \p2=($(12)-#1$) in
		arc[start angle={atan2(\y1,\x1)}, delta angle={atan2(\y2,\x2)-atan2(\y1,\x1)-360*(atan2(\y2,\x2)-atan2(\y1,\x1)>0)}, x radius=\qoffs, y radius=\qoffs] --
		(21) let \p1=($(21)-#2$), \p2=($(23)-#2$) in
		arc[start angle={atan2(\y1,\x1)}, delta angle={atan2(\y2,\x2)-atan2(\y1,\x1)-360*(atan2(\y2,\x2)-atan2(\y1,\x1)>0)}, x radius=\qoffs, y radius=\qoffs] --
		(32) let \p1=($(32)-#3$), \p2=($(31)-#3$) in
		arc[start angle={atan2(\y1,\x1)}, delta angle={atan2(\y2,\x2)-atan2(\y1,\x1)-360*(atan2(\y2,\x2)-atan2(\y1,\x1)>0)}, x radius=\qoffs, y radius=\qoffs] --
		cycle}
	
	\ifx\relax#5\relax
	\def\qlwidth{1pt}
	\else
	\def\qlwidth{#5}
	\fi
	
	\ifx\relax#7\relax
	\fill \nqhedge;
	\else
	\fill[#7]\nqhedge;
	\fi
	
	\ifx\relax#6\relax
	\draw[line width=\qlwidth,rounded corners=\qoffs]\nqhedge;
	\else
	\draw[line width=\qlwidth,#6]\nqhedge;
	\fi
}
\newsavebox\vdegbox
\savebox\vdegbox{\tikz{
		\draw[black,fill=black] (90:1) circle (.35);
		\draw[black,line width=0.10cm] (210:1) circle (.30);
		\draw[black,line width=0.10cm] (330:1) circle (.30);
		\draw[opacity=0] (0:1.2) circle (0.1);
}}
\newsavebox\vvbox
\savebox\vvbox{\tikz{
		\draw[black,line width=0.10cm] (90:1) circle (.30);
		\draw[black,fill=black] (210:1) circle (.35);
		\draw[black,fill=black] (330:1) circle (.35);
		\draw[opacity=0] (0:1.2) circle (0.1);
}}
\newsavebox\pdegbox
\savebox\pdegbox{\tikz{
		\draw[black,line width=0.10cm] (90:1) circle (.30);
		\draw[black,fill=black] (210:1) circle (.35);
		\draw[black,fill=black] (330:1) circle (.35);
		\draw[black,line width=0.28cm ] (210:1) -- (330:1);
		\draw[opacity=0] (0:1.2) circle (0.1);
}}
\newsavebox{\vvvbox}
\savebox{\vvvbox}{\tikz{
		\draw[black,fill=black] (90:1) circle (.35);
		\draw[black,fill=black] (210:1) circle (.35);
		\draw[black,fill=black] (330:1) circle (.35);
		\draw[opacity=0] (0:1.2) circle (0.1);
}}
\newcommand{\vvv}{\mathord{\scaleobj{1.2}{\scalerel*{\usebox{\vvvbox}}{x}}}}
\newcommand{\pivvv}{\pi_{\vvv}}
\newsavebox\evbox
\savebox\evbox{\tikz{
		\draw[black,fill=black] (90:1) circle (.35);
		\draw[black,fill=black] (210:1) circle (.35);
		\draw[black,fill=black] (330:1) circle (.35);
		\draw[black,line width=0.28cm ] (210:1) -- (330:1);
		\draw[opacity=0] (0:1.2) circle (0.1);
}}
\newsavebox\eebox
\savebox\eebox{\tikz{
		\draw[black,fill=black] (90:1) circle (.35);
		\draw[black,fill=black] (210:1) circle (.35);
		\draw[black,fill=black] (330:1) circle (.35);
		\draw[black,line width=0.28cm ] (90:1) -- (330:1);
		\draw[black,line width=0.28cm ] (90:1) -- (210:1);
		\draw[opacity=0] (0:1.2) circle (0.1);
}}
\newsavebox\eeebox
\savebox\eeebox{\tikz{
		\draw[black,fill=black] (90:1) circle (.35);
		\draw[black,fill=black] (210:1) circle (.35);
		\draw[black,fill=black] (330:1) circle (.35);
		\draw[black,line width=0.28cm ] (90:1) -- (330:1);
		\draw[black,line width=0.28cm ] (90:1) -- (210:1);
		\draw[black,line width=0.28cm ] (210:1) -- (330:1);
		\draw[opacity=0] (0:1.2) circle (0.1);
}}
\let\epsilon=\varepsilon
\let\rho=\varrho
\let\theta=\vartheta
\newcommand{\cA}{\mathcal{A}}
\newcommand{\cP}{\mathcal{P}}
\newtheoremstyle{note}  {4pt}  {4pt}  {\sl}  {}  {\bfseries}  {.}  {.5em}          {}
\newtheoremstyle{introthms}  {3pt}  {3pt}  {\itshape}  {}  {\bfseries}  {.}  {.5em}          {\thmnote{#3}}
\newtheoremstyle{remark}  {2pt}  {2pt}  {\rm}  {}  {\bfseries}  {.}  {.3em}          {}
\theoremstyle{plain}
\newtheorem{theorem}{Theorem}[section]
\newtheorem{lemma}[theorem]{Lemma}
\theoremstyle{note}
\newtheorem{dfn}[theorem]{Definition}
\theoremstyle{remark}
\newcommand*\patchAmsMathEnvironmentForLineno[1]{
	\expandafter\let\csname old#1\expandafter\endcsname\csname #1\endcsname
	\expandafter\let\csname old#1\expandafter\endcsname\csname end#1\endcsname
	\renewenvironment{#1}
	{\linenomath\csname old#1\endcsname}
	{\csname oldend#1\endcsname\endlinenomath}}
\newcommand{\ex}{\text{ex}}
\begin{document}
	
	\title[Beyond the broken tetrahedron]{Beyond the broken tetrahedron}
	
	\author[A.~Y.~Chen]{August Y. Chen}
	\address{Cornell University, Ithaca, New York, United States of America}
	\email{ayc74@cornell.edu}
	
	\author[B.~Sch\"{u}lke]{Bjarne Sch\"{u}lke}
	\address{Extremal Combinatorics and Probability Group, Institute for Basic Science, Daejeon, South Korea}
	\email{schuelke@ibs.re.kr}

	\keywords{Hypergraphs, Tur\'an density}
	
	\begin{abstract}
		
		Here we consider the hypergraph Tur\'an problem in uniformly dense hypergraphs as was suggested by Erd\H{o}s and S\'os.
		Given a $3$-graph $F$, the uniform Tur\'an density $\pivvv(F)$ of $F$ is defined as the supremum over all $d\in[0,1]$ for which there is an $F$-free uniformly $d$-dense $3$-graph, where uniformly $d$-dense means that every linearly sized subhypergraph has density at least $d$.
		Recently, Glebov, Kr\'al', and Volec and, independently, Reiher, R\"odl, and Schacht proved that $\pivvv(K_4^{(3)-})=\frac{1}{4}$, solving a conjecture by Erd\H{o}s and S\'os.
		Despite substantial attention, the uniform Tur\'an density is still only known for very few hypergraphs.
		In particular, the problem due to Erd\H{o}s and S\'os to determine~$\pivvv(K_4^{(3)})$ remains wide open.
		
		In this work, we determine the uniform Tur\'an density of the $3$-graph on five vertices that is obtained from $K_4^{(3)-}$ by adding an additional vertex whose link forms a matching on the vertices of $K_4^{(3)-}$.
		Further, we point to two natural intermediate problems on the way to determining $\pivvv(K_4^{(3)})$, and solve the first of these.
	\end{abstract}
	\maketitle
	\section{Introduction}
	
	Tur\'an's paper~\cite{T:41} in which he shows that complete, balanced,~$(r-1)$-partite graphs maximise the number of edges among all~$K_r$-free graphs is often seen as the starting point of extremal combinatorics.
	Already in this paper, he asked for similar results for hypergraphs.
	More than $80$ years later, still little is known in this regard.
	
	A~$k$-uniform hypergraph (or~$k$-graph)~$H=(V,E)$ consists of a vertex set~$V$ and an edge set~$E\subseteq V^{(k)}=\{e\subseteq V:\vert e\vert=k\}$.
	The extremal number for~$n\in\mathds{N}$ and a~$k$-uniform hypergraph~$F$ is the maximum number of edges in an~$F$-free~$k$-uniform hypergraph on~$n$ vertices and it is denoted by~$\ex(n,F)$.
	The Tur\'an density of~$F$ is~$\pi(F)=\lim_{n\to\infty}\frac{\ex(n,F)}{\binom{n}{k}}$ (this limit was shown to exist by monotonicity in~\cite{KNS:64}).
	The aforementioned result by Tur\'an was later generalised to the Erd\H{o}s-Stone-Simonovits theorem~\cite{ES:46,ES:66}, which determines the Tur\'an density of any graph~$F$ to be~$\frac{\chi(F)-2}{\chi(F)-1}$, where~$\chi(F)$ is the chromatic number of~$F$.
	
	For hypergraphs, only few (non-trivial) Tur\'an densities are know, one prominent example being the Fano plane~\cite{DF:00,FS:05,KS:05,BR:19}.
	The Tur\'an density of the complete~$3$-uniform hypergraph on four vertices,~$K_4^{(3)}$, is still not known, although it is widely conjectured to be~$5/9$.
	In fact, the problem is even open for the~$3$-uniform hypergraph that consists of three edges on four vertices, denoted by~$K_4^{(3)-}$.
	For more on the hypergraph Tur\'an problem, we refer to the survey by Keevash~\cite{K:11}.
	
	In this work, we consider a variant of the hypergraph Tur\'an problem that was suggested by Erd\H{o}s and S\'os~\cite{E:90,ES:82} who asked for the maximum density of~$F$-free hypergraphs~$H$ if we require~$H$ to be uniformly dense.
	More precisely, given~$d\in[0,1]$ and~$\eta>0$, a~$3$-uniform hypergraph~$H=(V,E)$ is called \emph{uniformly~$(d,\eta)$-dense} if for every~$U\subseteq V$, we have~$\vert U^{(3)}\cap E\vert\geq d\binom{\vert U\vert}{3}-\eta\vert V\vert^3$.
	Then the uniform Tur\'an density of a~$3$-uniform hypergraph~$F$ is defined as
	\begin{align*}
		\pivvv(F)=\sup\{d\in[0,1]:&\text{ for every }\eta>0\text{ and }n\in\mathds{N},\text{ there is an}\\ &F\text{-free, uniformly }(d,\eta)\text{-dense }3\text{-graph }H\text{ with }\vert V(H)\vert\geq n\}\,.
	\end{align*}
	
	Erd\H{o}s and S\'os specifically asked to determine~$\pivvv(K_4^{(3)})$ and~$\pivvv(K_4^{(3)-})$.
	Similarly as with the original Tur\'an density, these problems turned out to be very difficult.
	Only recently, Glebov, Kr\'al', and Volec~\cite{GKV:16} and Reiher, R\"odl, and Schacht~\cite{RRS:18} independently solved the latter, showing that~$\pivvv(K_4^{(3)-})=1/4$ and thus confirming a conjecture by Erd\H{o}s and S\'os.
	We refer to Reiher's survey~\cite{R:20} for a full description of the landscape of extremal problems in uniformly dense hypergraphs.
	Further, Reiher, R\"odl, and Schacht~\cite{RRS:182} \emph{characterised} all~$3$-uniform hypergraphs~$F$ with~$\pivvv(F)=0$ and showed that the minimum positive uniform Tur\'an density of any~$3$-uniform hypergraph is at least~$\frac{1}{27}$.
	Very recently, Garbe, Kr\'al', and Lamaison~\cite{GKL:21} indeed found~$3$-uniform hypergraphs whose uniform Tur\'an density is~$\frac{1}{27}$.
	Also very recently, Buci\'c, Cooper, Kr\'al', Mohr, and Correia~\cite{BCKMC:21} determined the uniform Tur\'an density of all tight~$3$-uniform cycles of length at least five.
	Despite significant attention (see~\cite{R:20,BCL:22,BPRRS:22}), there are few hypergraphs~$F$ for which~$\pivvv(F)$ is known. 
	
	In this work, we add another example, namely~$F_{\star}$, which we define as the hypergraph on five vertices with edge set~$\{123, 124, 134, 125, 345\}$, see Figure~\ref{fig:Fstar}.
	In other words,~$F_{\star}$ is the~$3$-uniform hypergraph obtained from a~$K_4^{(3)-}$ by adding an additional vertex whose link is a matching.
	
	\begin{figure}
		\centering
		\begin{tikzpicture}[scale=1.0]
			\coordinate (e) at (-2, 0);
			\coordinate (a) at (0, 1); 
			\coordinate (c) at (0, -1);
			\coordinate (b) at (2, 1);
			\coordinate (d) at (2, -1);
			\begin{pgfonlayer}{front}
				\foreach \i in {a, b, c, d, e}
				\fill  (\i) circle (2pt);
				
				\node at (-2.3,0) {$5$};
				\node at (0, 1.3) {$1$};
				\node at (0, -1.3) {$3$};
				\node at (2.3, 1.3) {$2$};
				\node at (2.3, -1.3) {$4$};
				
			\end{pgfonlayer}
			
			\qedge{(a)}{(b)}{(c)}{4.5pt}{1.5pt}{red!70!black}{red!70!black,opacity=0.2};
			\qedge{(a)}{(b)}{(d)}{4.5pt}{1.5pt}{red!70!black}{red!70!black,opacity=0.2};
			\qedge{(a)}{(d)}{(c)}{4.5pt}{1.5pt}{red!70!black}{red!70!black,opacity=0.2};
			\qedge{(a)}{(b)}{(e)}{4.5pt}{1.5pt}{red!70!black}{red!70!black,opacity=0.2};
			\qedge{(c)}{(e)}{(d)}{4.5pt}{1.5pt}{red!70!black}{red!70!black,opacity=0.2};
			
		\end{tikzpicture}
		\caption{The hypergraph $F_{\star}$.}
		\label{fig:Fstar}
	\end{figure}
	
				
				
			
			
			
	
	\begin{theorem}\label{theorem:main}
		We have~$\pivvv(F_{\star})=1/4$.
	\end{theorem}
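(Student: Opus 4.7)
The lower bound $\pivvv(F_\star)\geq 1/4$ is immediate: since $K_4^{(3)-}\subseteq F_\star$, the random-tournament construction that witnesses $\pivvv(K_4^{(3)-})=1/4$ is automatically $F_\star$-free.

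For the upper bound, fix $\delta>0$ and let $H$ be uniformly $(1/4+\delta,\eta)$-dense on $n$ vertices with $\eta$ small and $n$ large. The plan is a two-phase embedding of $F_\star$ that strengthens the Reiher-R\"odl-Schacht / Glebov-Kr\'al'-Volec proof of $\pivvv(K_4^{(3)-})=1/4$. In the first phase, a refinement of that structural argument should extract a vertex $v$ and disjoint linear-sized sets $A_1,A_2,A_3\subseteq V(H)\setminus\{v\}$ such that the link graph $L_v$ of $v$ (the graph on $V(H)\setminus\{v\}$ with edges $\{a,b\}$ whenever $vab\in E(H)$) restricted to $A_1\cup A_2\cup A_3$ is $\epsilon$-close in edit distance to the complete tripartite graph $K_{A_1,A_2,A_3}$. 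Every transversal triple $(a_1,a_2,a_3)\in A_1\times A_2\times A_3$ whose three cross pairs all lie in $L_v$ then yields a copy of $K_4^{(3)-}$ on $\{v,a_1,a_2,a_3\}$ with apex $v$, producing $\Omega(n^3)$ such copies.

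In the second phase, I extend one such copy to $F_\star$ by a 5th vertex $e$ satisfying $va_ie,\,a_ja_ke\in E(H)$ for some pairing $\{i\}\cup\{j,k\}=\{1,2,3\}$; via the identification $v\leftrightarrow 1,\ a_i\leftrightarrow 2,\ a_j\leftrightarrow 3,\ a_k\leftrightarrow 4,\ e\leftrightarrow 5$, this produces $F_\star\subseteq H$. It suffices to show that $\sum_{(a_1,a_2,a_3)\in A_1\times A_2\times A_3}|N_H(\{v,a_1\})\cap N_H(\{a_2,a_3\})|=\Omega(n^4)$ (handling the $i=1$ pairing; the others are symmetric); then the average over the $\Theta(n^3)$ triples is $\Omega(n)$, so at least one triple admits a valid $e$. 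Swapping the order of summation, this quantity factorises per $e$ into $|\{a_1\in A_1 : va_1e\in E(H)\}|\cdot|\{(a_2,a_3)\in A_2\times A_3 : a_2a_3e\in E(H)\}|$, whose two marginal sums over $e$ are $\Omega(n^2)$ and $\Omega(n^3)$ respectively from the link structure of $v$ and from uniform density of $H$ on $A_2\cup A_3$. The chief technical obstacle is converting these marginal bounds into the required lower bound on the sum of products, since a priori the two factors could anti-correlate in $e$ and cancel; I expect this to be overcome by a Cauchy-Schwarz / defect argument exploiting $(1/4+\delta,\eta)$-density on additional subsets to control the $L^2$-concentration of each factor in $e$, precluding pathological anti-correlation.
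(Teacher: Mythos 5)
Your lower bound is fine, but both phases of your upper bound have genuine gaps, and the second gap is, by your own admission, the heart of the matter.

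Phase 1 asserts that some vertex $v$ has a link $L_v$ that, restricted to three linear-sized classes $A_1,A_2,A_3$, is $\eps$-close in edit distance to the complete tripartite graph $K_{A_1,A_2,A_3}$. Nothing in the Reiher--R\"odl--Schacht or Glebov--Kr\'al'--Volec arguments produces such a structure, and I do not believe it exists in general. Uniform $(\tfrac14+\delta,\eta)$-density controls the number of edges inside subsets of $V(H)$; it says almost nothing about the link of any single vertex, whose \emph{average} edge density is only $\tfrac14+\delta$, far below the $\approx\tfrac23$ required for a near-complete tripartite graph on three linear classes. (To see there is no reason to expect this: take a $K_4^{(3)-}$-free extremal construction such as the random tournament $3$-graph, whose links are essentially triangle-free, and bump the density by $\delta$; the resulting links acquire some triangles but no tripartite shape.) What supersaturation plus Theorem~\ref{thm:reduction} \emph{do} give is $\Omega(n^4)$ copies of $K_4^{(3)-}$, hence by averaging some $v$ lying on $\Omega(n^3)$ link-triangles --- but these triangles are unstructured, and even this weaker statement requires setting up the reduced-hypergraph machinery that your ``refinement'' does not spell out.

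Phase 2 acknowledges its own gap and then does not close it. You need $\sum_e |\{a_1: va_1e\in E\}|\cdot|\{(a_2,a_3): a_2a_3e\in E\}| = \Omega(n^4)$, and you correctly observe that the two factors could anti-correlate in $e$, making the sum of products small even though both marginal sums are large. ``I expect this to be overcome by a Cauchy--Schwarz / defect argument'' is a hope, not an argument, and this correlation question is precisely what makes $F_\star$ harder than $K_4^{(3)-}$: one must force the ``projections'' of the apex triangle and of the extra edge to land in a common place. The density hypothesis alone does not obviously do this.

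For comparison, the paper's proof works entirely inside a $(\tfrac14+\eps,\vvv)$-dense reduced hypergraph $\cA$ (via Theorem~\ref{thm:reduction}), runs the RRS cleaning with two Ramsey applications to stabilise the auxiliary graphs $Q^i_{jk}$, and then iteratively prepares ``rows'' using Lemma~\ref{lemma:rowprep}: each row $r_i$ supplies an edge $e_i=x_iz_{im'}$ of $Q^{r_i}$ together with a large projection set $H(e_i)\subseteq\cP^{m'm}$. Because all projections land in the \emph{same} class $\cP^{m'm}$ and the number of rows exceeds $2/\eps^2$, pigeonhole forces three projections to intersect, and from this intersecting triple plus one prepared row the reduced image of $F_\star$ is read off. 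This row-preparation and common-projection device is exactly the mechanism that resolves the correlation obstacle your Phase 2 leaves open.
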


        We establish this result by developing a novel method to establish the existence of a hypergraph $F$ in a uniformly $(d,\eta)$-dense hypergraph $H$, building off the techniques of Reiher, R\"odl, and Schacht~\cite{RRS:18}.
        We provide a brief sketch of our arguments at the end of this section.
        Since this article was first made available online, our methods have been used by Li, Lin, Wang, and Zhou~\cite{LLWZ:23} to describe a large family of~$3$-uniform hypergraphs all of which have uniform Tur\'{a}n density~$1/4$. 
        Surprisingly, an adaptation of our method has also been used by Piga, Sales, and the second author~\cite{PSS:23} to determine the \emph{codegree} Tur\'{a}n density of any tight cycle minus an edge.
    
	One potential difficulty in moving from determining~$\pivvv(K_4^{(3)-})$ to~$\pivvv(K_4^{(3)})$, which has been described as perhaps the most urgent problem in this area~\cite{R:20}, is that there are no~$3$-uniform hypergraphs~$F$ with~$K_4^{(3)-}\subsetneq F\subsetneq K_4^{(3)}$ that could serve as intermediate steps.
	However, Reiher, R\"odl, and Schacht reduced the problem of determining the uniform Tur\'an density of any hypergraph to a problem in \emph{reduced hypergraphs} (see Theorem~\ref{thm:reduction}).
	The terminology of reduced hypergraphs gives rise to two natural problems `between' determining~$\pivvv(K_4^{(3)-})$ and~$\pivvv(K_4^{(3)})$.
	Our second result solves the first of these.
	Since the notation needed is not the lightest, we postpone the introduction of the notation to Section~\ref{sec:prelim} and ask the reader unfamiliar with it to read it first before continuing here.
	
	Suppose~$\cA=(I,\cP^{ij},\cA^{ijk})$ is a $(d,\vvv)$-dense reduced hypergraph and~$(\lambda,\varphi)$ is a reduced map from~$K_4^{(3)-}$ (with vertex set~$[4]$ and apex at vertex~$1$) to~$\cA$.
	Let~$\lambda(j)=i_j$ and~$\varphi(jk)=\alpha_{jk}$ for~$jk\in[4]^{(2)}$.
	Now note that if~$\alpha_{23}\alpha_{24}\alpha_{34}\in E(\cA^{i_2i_3i_4})$ (that is, if there is an edge in~$\cA^{i_2i_3i_4}$ which intersects~$\{\alpha_{23},\alpha_{24},\alpha_{34}\}$ in three vertices) then~$(\lambda,\varphi)$ would actually be a reduced map from~$K_4^{(3)}$ to~$\cA$.
	If~$\cA$ is~$(d,\vvv)$-dense for any~$d>0$, then~$\cA^{i_2i_3i_4}$ does contain edges (which intersect~$\{\alpha_{23},\alpha_{24},\alpha_{34}\}$ in zero or more vertices).
	This leads to the question which density~$d$ we need to guarantee that there is an edge in~$\cA^{i_2i_3i_4}$ which intersects~$\{\alpha_{23},\alpha_{24},\alpha_{34}\}$ in at least one (or in at least two) vertices.
	In this sense, moving from the problem for~$K_4^{(3)-}$ to the problem for~$K_4^{(3)}$ can be described as `gluing' first one, then two, and eventually three of the vertices in the constituent~$\cA^{i_2i_3i_4}$. 
        As mentioned earlier, upon gluing three vertices, then one obtains the emergence of the desired $K_4^{(3)}$.
	The setting in~$\cA$ with one vertex `glued' is shown in Figure \ref{fig:onevtxglue}.
	
	Our second result states that an intersection in at least one vertex (or the first gluing) is already guaranteed if the density is larger than~$1/4$ (which is clearly best possible). 
    More precisely, we prove the following.
    
	\begin{theorem}\label{theorem:onevtxglue}
		For~$\varepsilon>0$, there is some~$M\in\mathds{N}$ such that in every~$(\frac{1}{4}+\varepsilon,\vvv)$-dense reduced hypergraph~$\cA=([M],\cP^{ij},\cA^{ijk})$, there are indices~$i_1,i_2,i_3,i_4\in [M]$ and vertices~$\alpha_{jk}\in\cP^{i_j i_k}$ for~$jk\in[4]^{(2)}$, as well as vertices~$\alpha'_{23}\in\cP^{i_2 i_3}$ and~$\alpha'_{24}\in\cP^{i_2 i_4}$, such that $\alpha_{13}\alpha_{14}\alpha_{34}\in E(\cA^{i_1i_3i_4})$, $\alpha_{12}\alpha_{13}\alpha_{23} \in E(\cA^{i_1i_2i_3})$, $\alpha_{12}\alpha_{14}\alpha_{24}\in E(\cA^{i_1i_2i_4})$, and $\alpha'_{23}\alpha'_{24}\alpha_{34}\in E(\cA^{i_2i_3i_4})$.
	\end{theorem}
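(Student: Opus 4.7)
The plan is to combine the reduced-hypergraph formulation of $\pivvv(K_4^{(3)-})=\tfrac{1}{4}$ from Glebov--Kr\'al'--Volec~\cite{GKV:16} and Reiher--R\"odl--Schacht~\cite{RRS:18} with a link-density argument that supplies the extra edge.

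The starting observation is that for any distinct $i_2,i_3,i_4\in I$, the $\vvv$-density of the constituent $\cA^{i_2i_3i_4}$ forces the ``non-extendable'' set
\[
B_{i_2,i_3,i_4}=\{\alpha\in\cP^{i_3i_4}:\alpha\text{ has empty link in }\cA^{i_2i_3i_4}\}
\]
to satisfy $|B_{i_2,i_3,i_4}|\le\bigl(\tfrac34-\tfrac{\varepsilon}{2}\bigr)|\cP^{i_3i_4}|$. Indeed, every edge of $\cA^{i_2i_3i_4}$ has its $\cP^{i_3i_4}$-coordinate in the complement $G_{i_2,i_3,i_4}$, so
\[
|E(\cA^{i_2i_3i_4})|\le|\cP^{i_2i_3}|\cdot|\cP^{i_2i_4}|\cdot|G_{i_2,i_3,i_4}|,
\]
while the density hypothesis provides a matching lower bound of $\bigl(\tfrac14+\tfrac{\varepsilon}{2}\bigr)|\cP^{i_2i_3}|\cdot|\cP^{i_2i_4}|\cdot|\cP^{i_3i_4}|$. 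Hence $|G_{i_2,i_3,i_4}|\ge\bigl(\tfrac14+\tfrac{\varepsilon}{2}\bigr)|\cP^{i_3i_4}|$, and any $\alpha_{34}\in G_{i_2,i_3,i_4}$ directly furnishes the required edge $\alpha'_{23}\alpha'_{24}\alpha_{34}\in \cA^{i_2i_3i_4}$.

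The central task is therefore to produce a reduced $K_4^{(3)-}$-map with apex~$1$ and indices $i_1,i_2,i_3,i_4$ whose base vertex $\alpha_{34}$ lies in $G_{i_2,i_3,i_4}$ for the \emph{same} $i_2$ appearing in the map. I plan to do this by re-examining the GKV/RRS proof and identifying the step at which the base vertex $\alpha_{34}$ is first selected; at that step I impose the additional constraint $\alpha_{34}\in G_{i_2,i_3,i_4}$, a restriction that excludes only a $(\tfrac34-\tfrac{\varepsilon}{2})$-fraction of candidates from $\cP^{i_3i_4}$ and should therefore not obstruct the ensuing argument.

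The principal obstacle is to verify this flexibility rigorously: after constraining $\alpha_{34}$ to lie in a linearly-sized subset $G_{i_2,i_3,i_4}\subseteq\cP^{i_3i_4}$, one must ensure that the density inequalities underlying the remainder of the GKV/RRS argument continue to hold for the other constituents $\cA^{i_1i_2i_3},\cA^{i_1i_2i_4},\cA^{i_1i_3i_4}$. Since $\vvv$-density is itself robust to restrictions of vertex classes, I expect the necessary inequalities to survive essentially for free, modulo careful bookkeeping to propagate the density parameters through the layered choices of the proof; the delicate point is that the constraint on $\alpha_{34}$ depends on the still-unchosen $i_2$, so either one must thread $i_2$ through the proof before $\alpha_{34}$ is fixed, or one must pigeonhole over a pre-fixed pivot index $i_2^\ast\in I$ and align it with the $i_2$ produced by the construction.
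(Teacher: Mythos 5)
Your observation that the set $G_{i_2,i_3,i_4}\subseteq\cP^{i_3i_4}$ of vertices with nonempty link in $\cA^{i_2i_3i_4}$ has size at least $(\tfrac14+\varepsilon)|\cP^{i_3i_4}|$ is correct and is indeed the right way to think about what the extra edge costs. However, the step where you ``impose the additional constraint $\alpha_{34}\in G_{i_2,i_3,i_4}$'' at the point where $\alpha_{34}$ is ``first selected'' does not match the structure of the Reiher--R\"odl--Schacht argument you intend to re-run: in that proof the base vertex does not get selected at an identifiable intermediate step. It emerges at the very end by a pigeonhole over the projection sets $H(e_i)\subseteq\cP^{m'm}$ attached to the edges $e_i$ built across the rows, and $i_2$ is determined \emph{simultaneously} with that vertex. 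Restricting each $H(e_i)$ to $G_{i_2,m',m}$ before the pigeonhole requires knowing $i_2$, which is precisely the circularity you flag at the end. Moreover $|H(e_i)|\geq\varepsilon^2|\cP^{m'm}|$ and $|G|\geq(\tfrac14+\varepsilon)|\cP^{m'm}|$ need not intersect, so the restriction can kill the pigeonhole outright. Neither of your two suggested fixes (threading $i_2$ through, or pigeonholing over a pre-fixed pivot) is worked out, and each would require reordering the whole iterative construction, since $G_{i_2,i_3,i_4}$ also depends on $i_3,i_4$.

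The paper resolves this in a different and cleaner way, and it is worth seeing why no ``restriction'' on $\alpha_{34}$ is needed at all. When $v\in H(e_i)\cap H(e_j)$ with $i<j$, membership in $H(e_j)$ already says that $x_jz_{jm'}v\in E(\cA^{r_jm'm})$; taking $i_2=r_j$ this \emph{is} the extra edge, for free. The actual technical work is elsewhere: the original row-preparation lemma (Lemma~\ref{lemma:rowprep}) only guarantees triangles $xz_ky$ with $y$ in the class $\cP^{rr'}$ where $r'=\min J$, i.e.\ $i_2$ is forced to be the next index $r_{i+1}$, which need not coincide with the pigeonholed $r_j$. The paper therefore proves a strengthened row-preparation lemma (Lemma~\ref{lem:rowprepglue}) that produces triangles $xz_ky$ for which the $y$-vertex can live in $\cP^{rj}$ for every smaller surviving index $j$, not just the minimum. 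That flexibility, not any constraint on $\alpha_{34}$, is the missing ingredient, and it is what your sketch would need to discover and prove. As written, your proposal identifies the right quantitative fact but not the right mechanism, and leaves the central step as an unverified hope that the ``density inequalities survive essentially for free,'' which they do not.
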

	Here, $\alpha_{34}$ corresponds to the vertex resulting from gluing (see Figure~\ref{fig:onevtxglue}).
	
	To determine $\pivvv(K_4^{(3)})$, an interesting next step is to determine the optimal density~$d$ such that we find such a structure with two vertices glued, i.e., only requiring one of the vertices~$\alpha_{23}'$ and~$\alpha_{24}'$.
	
	The proofs of Theorem~\ref{theorem:main} and Theorem~\ref{theorem:onevtxglue} both utilize our novel method and could be formulated as one.
    For both results, we need to `glue' one vertex in the reduced hypergraph (for an illustration, see Figure~\ref{fig:Finreduced} for Theorem~\ref{theorem:main} and Figure~\ref{fig:onevtxglue} for Theorem~\ref{theorem:onevtxglue} respectively).
	For clarity however, we give the key steps separately in Section~\ref{sec:proofmain} (from Lemma~\ref{lemma:rowprep} onward) and Section~\ref{sec:proofglueing}, respectively.

    \subsection*{Our approach}
    At a high level, our method proceeds by iteratively preparing `rows' in the reduced hypergraph in a mutually compatible fashion, and subsequently applying the pigeonhole principle across many rows.
    This entails the emergence of substructures in the reduced hypergraph which intersect in one vertex, which is precisely the `glued' vertex.
    To prepare a single row we utilise ideas from the work of Reiher, R\"odl, and Schacht~\cite{RRS:18}.
    The crucial new step, however, is to combine the structures from several rows by the pigeonhole principle.
    In order to make this possible, the preparation of one row has to be done more carefully than what a simple application of~\cite{RRS:18} would give.   
    
    It is natural to expect that this method can be used to determine the uniform Tur\'an density of other 3-uniform hypergraphs which arise similarly by `gluing' a vertex in the reduced hypergraph. 
    Indeed, since the first publication of this article, Li, Lin, Wang, and Zhou~\cite{LLWZ:23} used our method to describe a family of~$3$-uniform hypergraphs all of which have uniform Tur\'an density~$\frac{1}{4}$.
    The aforementioned application of our method to the codegree Tur\'an problem by Piga, Sales, and the second author from~\cite{PSS:23} is somewhat surprising, though, since usually different variants of the hypergraph Tur\'an density behave quite differently and require separate approaches.
	
	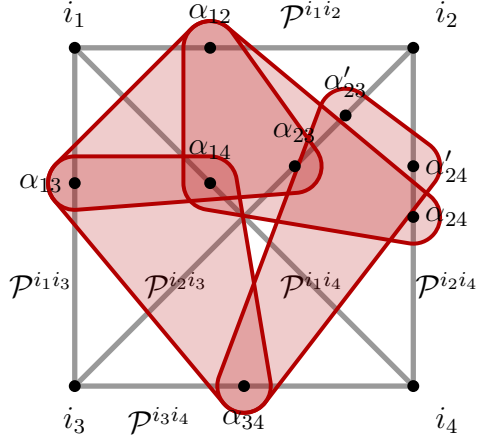
\begin{figure}
		\centering
		\begin{tikzpicture}[scale=2.25]
			\coordinate (1) at (0, 1); 
			\coordinate (3) at (0, -1);
			\coordinate (2) at (2, 1);
			\coordinate (4) at (2, -1);
			
			\coordinate (alpha_{13}) at (0, 0.2);
			\coordinate (alpha_{12}) at (0.8, 1);
			\coordinate (alpha_{14}) at (0.8, 0.2);
			\coordinate (alpha_{23}) at (1.3, 0.3);
			\coordinate (alpha_{24}) at (2, 0);
			\coordinate (alpha_{34}) at (1, -1);
			\coordinate (alpha'_{23}) at (1.6, 0.6);
			\coordinate (alpha'_{24}) at (2, 0.3);
			\begin{pgfonlayer}{front}
				\foreach \i in {1,2,3,4,alpha_{14},alpha_{12},alpha_{13},alpha_{23},alpha_{24},alpha_{34}, alpha'_{23}, alpha'_{24}}
				\fill  (\i) circle (1pt);
				
				\node at (0, 1.2) {$i_1$};
				\node at (0, -1.2) {$i_3$};
				\node at (2.2, 1.2) {$i_2$};
				\node at (2.2, -1.2) {$i_4$};
				
				\node at (-0.2, 0.2) {$\alpha_{13}$};
				\node at (0.8, 1.2) {$\alpha_{12}$};
				\node at (0.8, 0.4) {$\alpha_{14}$};
				\node at (1.3, 0.5) {$\alpha_{23}$};
				\node at (2.2, 0) {$\alpha_{24}$};
				\node at (1, -1.2) {$\alpha_{34}$};
				\node at (1.6, 0.8) {$\alpha'_{23}$};
				\node at (2.2, 0.3) {$\alpha'_{24}$};
				
			\end{pgfonlayer}
			\draw[black!40!white, line width=2pt] (1) -- (2);
			\draw[black!40!white, line width=2pt] (2) -- (3);
			\draw[black!40!white, line width=2pt] (3) -- (4);
			\draw[black!40!white, line width=2pt] (4) -- (1);
			\draw[black!40!white, line width=2pt] (1) -- (3);
			\draw[black!40!white, line width=2pt] (2) -- (4);
			
			\node at (-0.2, -0.4) {$\cP^{i_1i_3}$};
			\node at (1.4, 1.2) {$\cP^{i_1i_2}$};
			\node at (1.4, -0.4) {$\cP^{i_1i_4}$};
			\node at (0.6, -0.4) {$\cP^{i_2i_3}$};
			\node at (2.2, -0.4) {$\cP^{i_2i_4}$};
			\node at (0.5, -1.2) {$\cP^{i_3i_4}$};
			
			
			\qedge{(alpha_{13})}{(alpha_{14})}{(alpha_{34})}{4.5pt}{1.5pt}{red!70!black}{red!70!black,opacity=0.2};
			\qedge{(alpha_{24})}{(alpha_{14})}{(alpha_{12})}{4.5pt}{1.5pt}{red!70!black}{red!70!black,opacity=0.2};
			\qedge{(alpha_{23})}{(alpha_{13})}{(alpha_{12})}{4.5pt}{1.5pt}{red!70!black}{red!70!black,opacity=0.2};
			\qedge{(alpha'_{23})}{(alpha'_{24})}{(alpha_{34})}{4.5pt}{1.5pt}{red!70!black}{red!70!black,opacity=0.2};
		\end{tikzpicture}
		\caption{The reduced image of $K_4^{(3)-}$ with an additional edge and one vertex `glued' ($\alpha_{34})$.}
		\label{fig:onevtxglue}
	\end{figure}
	
	\section{Preliminaries}\label{sec:prelim}
    When dealing with singleton sets, we may omit the parentheses, for instance, given a set~$I$ and~$r\in I$, we may simply write~$I\setminus r$ etc.
    
	When proving~$\pivvv(K_4^{(3)-})=\frac{1}{4}$, Reiher, R\"odl, and Schacht~\cite{RRS:18} used the hypergraph regularity lemma.
	After ``regularising'' a hypergraph, the problem reduces to an extremal problem in the reduced hypergraph.
	We will follow this approach and use the terminology introduced in~\cite{RRS:18} and laid out systematically by Reiher in his survey~\cite{R:20}.

	For a hypergraph~$H$, we denote the number of its edges by~$e(H)=\vert E(H)\vert$ and the number of its vertices by~$v(H)=\vert V(H)\vert$.
	Given a set~$I$, we denote by~$I^{(\ell)}$ the set of all~$\ell$-element subsets of~$I$.
	When considering such subsets, we often omit the parentheses and write for instance~$ijk$ for an element of~$I^{(3)}$, instead of~$\{i,j,k\}$.
	For a hypergraph~$H$, we define the shadow of~$H$ as~$\partial H=\{e\setminus x:x\in e\in E(H)\}$.
	
	\begin{dfn}
		Let~$I$ be a set of indices, for all~$ij\in I^{(2)}$, let~$\cP^{ij}$ be a set of vertices which are disjoint for~$ij\neq i'j'$, and for~$ijk\in I^{(3)}$, let~$\cA^{ijk}$ be a~$3$-partite~$3$-graph with partition classes~$\cP^{ij}$,~$\cP^{ik}$, and~$\cP^{jk}$.
		Then the~$\binom{\vert I\vert}{2}$-partite $3$-graph~$\cA$ with vertex set~$V(\cA)=\bigcup_{ij\in I^{(2)}}\cP^{ij}$ and edge set~$E(\cA)=\bigcup_{ijk\in I^{(3)}}E(\cA^{ijk})$ is called a reduced hypergraph with index set~$I$, vertex classes~$\cP^{ij}$ and constituents~$\cA^{ijk}$.
	\end{dfn}
	For brevity, we often simply write `let~$\cA=(I,\cP^{ij},\cA^{ijk})$ be a reduced hypergraph' instead of `let~$\cA$ be are reduced hypergraph with index set~$I$, vertex classes~$\cP^{ij}$ and constituents~$\cA^{ijk}$'.
	
	Next, let us take a look at how the density condition in our host hypergraphs translates to a density condition in the respective reduced hypergraphs.
	
	\begin{dfn}
		Let~$\cA=(I,\cP^{ij},\cA^{ijk})$ be a reduced hypergraph and let~$d\in [0,1]$ be a real number.
		If~$e(\cA^{ijk})\geq d\vert\cP^{ij}\vert\vert\cP^{ik}\vert\vert\cP^{jk}\vert$ holds for all~$ijk\in I^{(3)}$, we say that~$\cA$ is~$(d,\vvv)$-dense.
	\end{dfn}
	
	The following definition of a reduced map describes the setting in which a hypergraph~$F$ can be embedded into a host hypergraph~$H$ (by using the hypergraph regularity method) in terms of the reduced hypergraph of~$H$.
	
	\begin{dfn}
		A reduced map from a~$3$-graph~$F$ to a reduced hypergraph~$\cA=(I,\cP^{ij},\cA^{ijk})$ is a pair~$(\lambda,\varphi)$ such that
		\begin{enumerate}
			\item $\lambda: V(F)\to I$ and~$\varphi: \partial F\to V(\cA)$;
			\item for~$uv\in\partial F$, we have~$\lambda(u)\neq\lambda(v)$ and~$\varphi(uv)\in \cP^{\lambda(u)\lambda(v)}$;
			\item if~$uvw\in E(F)$, then~$\varphi(uv)\varphi(uw)\varphi(vw)\in E(\cA^{\lambda(u)\lambda(v)\lambda(w)})$.
		\end{enumerate}
		
		If there is a reduced map from~$F$ to~$\cA$, we say that~$\cA$ contains a reduced image of~$F$, otherwise~$\cA$ is called~$F$-free.
	\end{dfn}
	
	Now the following theorem due to Reiher, R\"odl, and Schacht (and appearing in this formulation in Reiher's survey~\cite{R:20}) reduces the problem of determining~$\pivvv(F)$ to a problem about reduced hypergraphs.
	
	\begin{theorem}\label{thm:reduction}
		For a~$3$-graph~$F$, we have
		\begin{align*}
			\pivvv(F)=\sup\{d\in[0,1]:&\text{ For every }M\in\mathds{N}\text{, there is a } (d,\vvv)\text{-dense,} \\ &F\text{-free reduced hypergraph }\cA\text{ with index set }[M]\}\,.
		\end{align*}
	\end{theorem}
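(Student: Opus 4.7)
The plan is to mirror the Reiher-R\"odl-Schacht proof that $\pivvv(K_4^{(3)-}) = 1/4$ \cite{RRS:18}, extending it to enforce the extra ``gluing'' edge $\alpha'_{23}\alpha'_{24}\alpha_{34}$ in the otherwise-missing constituent $\cA^{i_2i_3i_4}$. Given a $(1/4+\varepsilon,\vvv)$-dense reduced hypergraph $\cA$, the goal is to locate a reduced $K_4^{(3)-}$ with apex $i_1$ whose opposite vertex $\alpha_{34} \in \cP^{i_3i_4}$ is additionally covered by some edge of $\cA^{i_2i_3i_4}$; once such a structure is produced, any witness edge through $\alpha_{34}$ in $\cA^{i_2i_3i_4}$ supplies $\alpha'_{23}$ and $\alpha'_{24}$ and the proof is complete.

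First I would carry out an averaging / preparation phase to select a quadruple $(i_1, i_2, i_3, i_4)$ so that all four relevant constituents retain density close to $1/4 + \varepsilon$ and their vertex links admit enough regularity to run the RRS counting; this is presumably the role of the preparation Lemma~\ref{lemma:rowprep} foreshadowed in the text. Next, introduce the ``good set'' $G \subseteq \cP^{i_3i_4}$ of vertices lying in at least one edge of $\cA^{i_2i_3i_4}$; a straightforward double-count using the density bound gives $|G| \geq (1/4 + \varepsilon)|\cP^{i_3i_4}|$, i.e., $G$ is a positive fraction of $\cP^{i_3i_4}$.

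The main step is the constrained embedding: pick $\alpha_{12} \in \cP^{i_1i_2}$ and, using edges of $\cA^{i_1i_2i_3}$ and $\cA^{i_1i_2i_4}$ through it, assemble a cherry $(\alpha_{13},\alpha_{23},\alpha_{14},\alpha_{24})$; then locate $\alpha_{34} \in G$ with $\alpha_{13}\alpha_{14}\alpha_{34}\in \cA^{i_1i_3i_4}$. The RRS scheme establishes that at density $1/4+\varepsilon$ such cherry-completions exist in abundance; the new task is to achieve one with $\alpha_{34}$ further constrained to $G$.

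The main obstacle is performing this last step without forfeiting the tight $1/4$ threshold. Since $G$ can be as small as a $\left(\tfrac{1}{4}+\varepsilon\right)$-fraction of $\cP^{i_3i_4}$, a naive up-front restriction of $\cP^{i_3i_4}$ to $G$ can drive the density of $\cA^{i_1i_3i_4}$ below $1/4$, and the RRS argument collapses. I would therefore not restrict at the outset but absorb the goodness constraint into the counting -- weighting edges of $\cA^{i_1i_3i_4}$ by $\mathbf{1}[\alpha_{34}\in G]$ -- or, equivalently, prove a supersaturation strengthening of the RRS argument that produces $\Omega_\varepsilon\bigl(|\cP^{i_1i_3}|\,|\cP^{i_1i_4}|\,|\cP^{i_3i_4}|\bigr)$ completions of each useful cherry, so that after restricting $\alpha_{34}$ to the positive-density set $G$ at least one completion survives. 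This is where the shared framework with Theorem~\ref{theorem:main} is presumably exploited: the preparation produces both the cherry structure and enough slack in the link counts that the goodness condition on $\alpha_{34}$ can be imposed for free.
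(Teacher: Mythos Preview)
Your proposal does not address the stated theorem at all. Theorem~\ref{thm:reduction} is the general reduction principle asserting that, for \emph{every} $3$-graph $F$, the uniform Tur\'an density $\pivvv(F)$ coincides with the supremum of densities $d$ for which $(d,\vvv)$-dense $F$-free reduced hypergraphs exist on arbitrarily large index sets. Its proof requires the hypergraph regularity method (regularising the host hypergraph, extracting the reduced hypergraph, and using the counting/embedding lemma in the converse direction), and the paper does not prove it: it is quoted from Reiher, R\"odl, and Schacht and Reiher's survey~\cite{R:20,RRS:18}. Nothing in your write-up touches any of this; you never mention regularity, and you specialise immediately to the $K_4^{(3)-}$-with-gluing configuration.

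What you have written is a sketch aimed at Theorem~\ref{theorem:onevtxglue}. Even read that way, the plan has a structural gap. You propose to \emph{first} fix a quadruple $(i_1,i_2,i_3,i_4)$ and then run a supersaturated RRS-type count inside those four constituents, hoping that enough completions $\alpha_{34}$ survive the restriction to the ``good'' set $G\subseteq\cP^{i_3i_4}$. But the RRS argument for $K_4^{(3)-}$ is not a local count on four fixed indices: it is a Ramsey-and-iteration argument over a large index set (the colourings leading to~$(\star)$, the sets $S^i_{jk}(r_*)$, and Lemma~\ref{lemma:quadmanytriangles}), and the reduced $K_4^{(3)-}$ only materialises at the end, with the indices determined by the process. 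There is no reason a single preselected quadruple should carry the required structure, let alone with the supersaturation you need to absorb the factor-$(\tfrac14+\varepsilon)$ loss from restricting to $G$. The paper's actual proof of Theorem~\ref{theorem:onevtxglue} avoids this entirely: it iterates Lemma~\ref{lem:rowprepglue} to prepare $\tfrac{1}{\varepsilon^2}+1$ ``rows'', each contributing an edge $e_i=x_iz_{im'}\in Q^{r_i}$ whose $\varepsilon^2$-dense projection $H(e_i)\subseteq\cP^{m'm}$ plays the role of your set $G$; a pigeonhole over these $H(e_i)$ then forces two of them to intersect in a common vertex $v$, and that $v$ is the glued vertex $\alpha_{34}$. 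The quadruple $(r_i,r_j,m',m)$ is an \emph{output} of the argument, not an input.
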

	
	\section{Proof of Theorem~\ref{theorem:main}}\label{sec:proofmain}
	\begin{proof}[Proof of Theorem~\ref{theorem:main}]
		Since~$K_4^{(3)-}\subseteq F_{\star}$, we have~$1/4=\pivvv(K_4^{(3)-})\leq\pivvv(F_{\star})$.
		
		Due to Theorem~\ref{thm:reduction}, it is thus enough to show that for every~$\varepsilon>0$, there is some large~$M\in\mathds{N}$ such that every~$(\frac{1}{4}+\varepsilon, \vvv)$-dense reduced hypergraph with index set~$[M]$ contains a reduced image of~$F_{\star}$.
		Let thus~$\varepsilon>0$ be given, without loss of generality~$\varepsilon\ll 1$, and choose~$M,M_*,m\in\mathds{N}$, and~$\delta>0$ such that
		\begin{align}\label{eq:hierarchy}
			M^{-1}\ll M_*^{-1}\ll m^{-1}\ll\delta\ll\varepsilon\ll 1\,.
		\end{align}
		Then let~$\cA$ be a~$(\frac{1}{4}+\varepsilon, \vvv)$-dense reduced hypergraph with index set~$[M]$, vertex classes~$\cP^{ij}$ and constituents~$\cA^{ijk}$.
		We will show that~$\cA$ contains a reduced image of~$F_{\star}$.
		
		
		We begin with a `cleaning process' (until Lemma~\ref{lemma:quadmanytriangles}) similar to the one in~\cite{RRS:18}.
		However, we will allow for a bit more `robustness'.
		Consider any~$ijk\in [M]^{(3)}$ with~$i<j<k$.
		We define the bipartite graph~$Q^i_{jk}$ on the vertex set~$\cP^{ij}\dcup\cP^{ik}$ by placing an edge between~$w \in \mathcal{P}^{ij}$ and~$v \in \mathcal{P}^{ik}$ if there are at least~$\epsilon^2 |\mathcal{P}^{jk}|$ vertices~$u$ in~$\mathcal{P}^{jk}$ such that~$wvu \in E(\mathcal{A}^{ijk})$.
		Similarly, we define the bipartite graph~$Q^k_{ij}$ on the vertex set~$\cP^{ik}\dcup\cP^{jk}$ by placing an edge between~$v \in \mathcal{P}^{ik}$ and~$w \in \mathcal{P}^{jk}$ if there are at least~$\epsilon^2 |\mathcal{P}^{ij}|$ vertices~$u$ in~$\mathcal{P}^{ij}$ such that~$uvw \in E(\mathcal{A}^{ijk})$.
		We write~$N_{Q^i_{jk}}(x)$ and~$d_{Q^i_{jk}}(x)$ for the neighbourhood and the degree of a vertex~$x$ in the graph~$Q^i_{jk}$, respectively, and similarly we define~$N_{Q^k_{ij}}(x)$ and~$d_{Q^k_{ij}}(x)$. 
		
		\begin{lemma}\label{lemma:sumofsquares}
			For any~$ijk\in[M]^{(3)}$ with~$i<j<k$, we have 
			\[ \sum_{v \in \mathcal{P}^{ik}} d_{Q^i_{jk}}(v) d_{Q^k_{ij}}(v) \ge \big(\frac14+\frac{\epsilon}2\big) |\mathcal{P}^{ij}||\mathcal{P}^{jk}||\mathcal{P}^{ik}| \,.\]
		\end{lemma}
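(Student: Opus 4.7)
The plan is to double count edges of the constituent $\cA^{ijk}$. On one side, we use the $(\tfrac14+\varepsilon,\vvv)$-density to get $e(\cA^{ijk})\geq(\tfrac14+\varepsilon)\vert\cP^{ij}\vert\vert\cP^{ik}\vert\vert\cP^{jk}\vert$. On the other side, we relate the number of edges to the quantity $\sum_v d_{Q^i_{jk}}(v)d_{Q^k_{ij}}(v)$ by distinguishing, for each edge $wvu\in E(\cA^{ijk})$ (with $w\in\cP^{ij}$, $v\in\cP^{ik}$, $u\in\cP^{jk}$), whether or not the pairs $wv$ and $vu$ lie in the bipartite graphs $Q^i_{jk}$ and $Q^k_{ij}$, respectively.

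The key observation is that if an edge $wvu$ is \emph{good}, meaning $wv\in E(Q^i_{jk})$ \emph{and} $vu\in E(Q^k_{ij})$, then $w\in N_{Q^i_{jk}}(v)$ and $u\in N_{Q^k_{ij}}(v)$. Hence, for every fixed $v\in\cP^{ik}$, the number of good edges through $v$ is at most $d_{Q^i_{jk}}(v)\,d_{Q^k_{ij}}(v)$, and summing over $v$ yields
\[
\#\{\text{good edges}\}\;\leq\;\sum_{v\in\cP^{ik}}d_{Q^i_{jk}}(v)\,d_{Q^k_{ij}}(v)\,.
\]

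Next, I would bound the number of \emph{bad} edges, i.e.\ those for which at least one of $wv\notin E(Q^i_{jk})$ or $vu\notin E(Q^k_{ij})$ holds. For each pair $(w,v)\in\cP^{ij}\times\cP^{ik}$ with $wv\notin E(Q^i_{jk})$, the definition of $Q^i_{jk}$ gives fewer than $\varepsilon^2\vert\cP^{jk}\vert$ choices of $u$ completing an edge of $\cA^{ijk}$; summing over all such pairs yields at most $\varepsilon^2\vert\cP^{ij}\vert\vert\cP^{ik}\vert\vert\cP^{jk}\vert$ edges. The symmetric estimate for $Q^k_{ij}$ gives another $\varepsilon^2\vert\cP^{ij}\vert\vert\cP^{ik}\vert\vert\cP^{jk}\vert$, so the total number of bad edges is at most $2\varepsilon^2\vert\cP^{ij}\vert\vert\cP^{ik}\vert\vert\cP^{jk}\vert$.

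Combining the three estimates gives
\[
\sum_{v\in\cP^{ik}}d_{Q^i_{jk}}(v)\,d_{Q^k_{ij}}(v)\;\geq\;e(\cA^{ijk})-2\varepsilon^2\vert\cP^{ij}\vert\vert\cP^{ik}\vert\vert\cP^{jk}\vert\;\geq\;\bigl(\tfrac14+\varepsilon-2\varepsilon^2\bigr)\vert\cP^{ij}\vert\vert\cP^{ik}\vert\vert\cP^{jk}\vert\,,
\]
and the hierarchy $\varepsilon\ll 1$ in~\eqref{eq:hierarchy} ensures $\varepsilon-2\varepsilon^2\geq\varepsilon/2$, giving the claimed inequality. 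There is no real obstacle: the argument is essentially a clean double count plus a trivial estimate on the contribution of the pairs with low codegree. The only point requiring a touch of care is making sure the $Q^i_{jk}$- and $Q^k_{ij}$-bad edges are counted in compatible directions so that the defect is only $2\varepsilon^2$ rather than something worse.
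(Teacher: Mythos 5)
Your proof is correct and follows essentially the same double-counting argument as the paper: your notion of \emph{good} edges corresponds to the paper's set $S(v)$, your two kinds of \emph{bad} edges to $S_1(v)$ and $S_2(v)$, and the final arithmetic is identical. There is nothing to add.
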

		\begin{proof}
			We use a double-counting argument.
			For~$v \in \cP^{ik}$, set~$T(v)=\{w_1vw_2\in E(\cA^{ijk})\}$ and~$S(v)=\{w_1vw_2\in E(\mathcal{A}^{ijk}):vw_1\in E(Q^i_{jk}),vw_2\in E(Q^k_{ij})\}$,
			\begin{align*}
				&S_1(v)=\{w_1vw_2\in E(\mathcal{A}^{ijk}):w_1\in\cP^{ij},vw_1\notin E(Q^i_{jk})\}\text{, and}\\
				&S_2(v)=\{w_1vw_2\in E(\mathcal{A}^{ijk}):w_2\in\cP^{jk},vw_2\notin E(Q^k_{ij})\}\,.
			\end{align*}
			Note that by double-counting, we have the following.
			\begin{align*}
				\big(\frac{1}{4}+\varepsilon\big)\vert\cP^{ij}\vert\vert\cP^{jk}\vert\vert\cP^{ik}\vert&\leq\sum_{v\in\cP^{ik}}\vert T(v)\vert\leq\sum_{v\in\cP^{ik}}\vert S(v)\vert+\vert S_1(v)\vert+\vert S_2(v)\vert\\
				&\leq\sum_{v\in\cP^{ik}} d_{Q^i_{jk}}(v) d_{Q^k_{ij}}(v)+2\varepsilon^2\vert\cP^{ij}\vert\vert\cP^{jk}\vert\vert\cP^{ik}\vert
				\,.
			\end{align*}
			Since~$\varepsilon\ll 1$, the assertion follows.
		\end{proof}
		
		Given~$ijk\in[M]^{(3)}$ with~$i<j<k$, by Lemma \ref{lemma:sumofsquares} and the Cauchy-Schwartz inequality, we have either
		\begin{align}\label{eq:blue}
			\sum_{v \in \mathcal{P}^{ik}} d_{Q^i_{jk}}(v)^2 \ge \big(\frac14+\frac\epsilon2\big) |\mathcal{P}^{ij}|^2 |\mathcal{P}^{ik}|\,,
		\end{align}
		in which case we colour $ijk$ blue, and if this is not true, we must have
		\[ \sum_{v \in \mathcal{P}^{ik}} d_{Q^k_{ij}}(v)^2 \ge \big(\frac14+\frac\epsilon2\big) |\mathcal{P}^{jk}|^2 |\mathcal{P}^{ik}|\,,\]
		in which case we colour $ijk$ red. 
		
		By~(\ref{eq:hierarchy}) and by Ramsey's theorem, there is a subset of~$M_*$ indices, without loss of generality~$[M_*]$, such that all~$3$-subsets of this set are coloured with the same colour, say blue.
		
		There is one more step in the `cleaning process', for which we apply Ramsey's theorem to a different colouring of the~$3$-subsets of~$[M_*]$.
		For~$ijk\in[M_*]^{(3)}$ with~$i<j<k$ and $r \in \mathbb{N}$, let 
		\[ S_{jk}^i(r) = \{x \in \mathcal{P}^{ik}: d_{Q^i_{jk}}(x) \ge \big(\frac12+r \delta\big) |\mathcal{P}^{ij}|\}\,.\]
		Since~$ijk$ was coloured in blue,~\eqref{eq:blue} entails
		\begin{align*}
			\big(\frac14+\frac{\epsilon}2\big) |\mathcal{P}^{ij}|^2 |\mathcal{P}^{ik}|\leq\sum_{v \in \mathcal{P}^{ik}} d_{Q^i_{jk}}(v)^2\leq \big(\frac12+\delta\big)^2 |\mathcal{P}^{ij}|^2|\mathcal{P}^{ik}\setminus S_{jk}^i(1)|+ |\mathcal{P}^{ij}|^2|S_{jk}^i(1)|\,.
		\end{align*}
		
		Because of~\eqref{eq:hierarchy}, we infer that~$\vert S_{jk}^i(1)\vert\geq\delta\vert\cP^{ik}\vert$.
		
		Clearly the $S_{jk}^i(r)$'s are decreasing and $S_{jk}^i(r) = \emptyset$ for $r > \frac{1}{2\delta}$, so we may colour $ijk$ by the maximum~$r\in[\frac{1}{2\delta}]$ (w.l.o.g., we assume that~$\frac{1}{2\delta}$ is an integer) for which $|S_{jk}^i(r)| \ge \delta |\mathcal{P}^{ik}|$.
		
		Again by Ramsey's theorem, as $m \ll M_{*}$ and after potentially reordering the indices, we can suppose that $[m]$ is a blue clique in the first coloring and an $r_{*}$-coloured clique in the second coloring, for some~$r_*\in[\frac{1}{2\delta}]$.
		The following is the conclusion of our cleaning.
		\begin{align*}(\star)
			\text{ Every }ijk\in[m]^{(3)}\text{ with }i<j<k\text{ satisfies~\eqref{eq:blue} and }\vert S_{jk}^i(r_*)\vert\geq\delta\vert\cP^{ik}\vert>\vert S_{jk}^i(r_*+1)\vert\,.
		\end{align*}
		
		Now, for all $i \in [m]$, define the graph~$Q^i = \bigcup_{1\leq i<j < k\leq m} Q^i_{jk}$.
		We need the following auxiliary result, which is a robust version of a result in~\cite{RRS:18}.
		\begin{lemma}\label{lemma:quadmanytriangles}
			For~$i,j_1,j_2,k \in [m]$ with~$i<j_1<j_2<k$, and any~$x \in S_{j_1 k}^i(r_*) \cap S_{j_2 k}^i(r_*)$, there are at least~$\delta |\mathcal{P}^{ij_1}||\mathcal{P}^{ij_2}|$ triangles~$xyz$ in~$Q^i$ with~$y \in \mathcal{P}^{ij_2}, z \in \mathcal{P}^{ij_1}$.
		\end{lemma}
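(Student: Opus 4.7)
Here is my plan.

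\textbf{Reformulation.} A triangle $xyz$ in $Q^i$ with $y \in \cP^{ij_2}$, $z \in \cP^{ij_1}$ requires $zx \in Q^i_{j_1 k}$, $yx \in Q^i_{j_2 k}$ and $yz \in Q^i_{j_1 j_2}$. Set $A := N_{Q^i_{j_1 k}}(x) \subseteq \cP^{ij_1}$ and $B := N_{Q^i_{j_2 k}}(x) \subseteq \cP^{ij_2}$; since $x$ lies in both $S^i_{j_1 k}(r_*)$ and $S^i_{j_2 k}(r_*)$, we have $|A| \ge (\tfrac12 + r_*\delta)|\cP^{ij_1}|$ and $|B| \ge (\tfrac12 + r_*\delta)|\cP^{ij_2}|$. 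The lemma reduces to showing $e_{Q^i_{j_1 j_2}}(A, B) \ge \delta|\cP^{ij_1}||\cP^{ij_2}|$. A naive inclusion-exclusion using only the density $\ge 1/4 + \epsilon/2$ of $Q^i_{j_1 j_2}$ and the lower bounds on $|A|,|B|$ yields at most $(2r_*\delta + \epsilon/2 - 3/4)|\cP^{ij_1}||\cP^{ij_2}|$, which is negative for small $r_*\delta$; so one must extract more from the cleaning.

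\textbf{Using the degree cap.} Apply the two outputs of $(\star)$ to the blue triple $ij_1 j_2$: the $\sum d^2$ lower bound
$\sum_{y \in \cP^{ij_2}} d_{Q^i_{j_1 j_2}}(y)^2 \ge (\tfrac14 + \tfrac{\epsilon}{2})|\cP^{ij_1}|^2|\cP^{ij_2}|$, together with the cap $|S^i_{j_1 j_2}(r_*+1)| < \delta|\cP^{ij_2}|$. Splitting $\sum d^2$ between the exceptional $\delta$-fraction (where we use only $d \le |\cP^{ij_1}|$) and the remainder (where $d < (\tfrac12 + (r_*+1)\delta)|\cP^{ij_1}|$) first recovers $(r_*+1)\delta\bigl(1+(r_*+1)\delta\bigr) \ge \tfrac{\epsilon}{2} - \delta$, so in particular $r_*\delta \ge \tfrac{\epsilon}{2} - O(\delta) - O(\epsilon^2)$. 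Then, for a threshold $\phi = (\tfrac12 - r_*\delta + \eta)|\cP^{ij_1}|$ with a parameter $\eta > 0$ to be chosen, the same splitting (now using $d < \phi$ on the low set and $d < (\tfrac12 + (r_*+1)\delta)|\cP^{ij_1}|$ on the middle set) yields a bound
\[
\mu := \frac{|\{y : d_{Q^i_{j_1 j_2}}(y) < \phi\}|}{|\cP^{ij_2}|} \;\le\; \frac{(r_*+1)\delta\bigl(1+(r_*+1)\delta\bigr) + \delta - \tfrac{\epsilon}{2}}{(\tfrac12+(r_*+1)\delta)^2 - (\tfrac12 - r_*\delta+\eta)^2}.
\]

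\textbf{Counting and optimisation.} For each $y \in \cP^{ij_2}$ with $d_{Q^i_{j_1 j_2}}(y) \ge \phi$, inclusion-exclusion gives $|N_{Q^i_{j_1 j_2}}(y) \cap A| \ge \eta|\cP^{ij_1}|$. At least $(|B|/|\cP^{ij_2}| - \mu)|\cP^{ij_2}|$ vertices of $B$ meet this threshold, so
\[
e_{Q^i_{j_1 j_2}}(A, B) \;\ge\; \Bigl(\tfrac{|B|}{|\cP^{ij_2}|} - \mu\Bigr)\,\eta\,|\cP^{ij_1}||\cP^{ij_2}|.
\]
Choosing $\eta$ as a function of $r_*\delta$ (comparable to $r_*\delta$ near the lower end $r_*\delta \approx \epsilon/2$, and $\eta$ of order $\delta$ when $r_*\delta$ is bounded away from $0$) forces $\bigl(|B|/|\cP^{ij_2}| - \mu\bigr)\eta \ge \delta$ uniformly in the allowed range of $r_*\delta$; the slack needed in this optimisation is absorbed by the hierarchy $\delta \ll \epsilon$.

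\textbf{Main obstacle.} The delicate point is the optimisation of $\eta$: the right choice depends on the unknown value of $r_*\delta \in [\Omega(\epsilon), 1/2]$, with the tightest case occurring near $r_*\delta \approx \sqrt{\epsilon}$, where the available slack is only polynomial in $\epsilon$. Without the cap $|S^i_{j_1 j_2}(r_*+1)| < \delta|\cP^{ij_2}|$ the argument collapses, because the $\sum d^2$ lower bound is then compatible with the degree distribution concentrating on a tiny set of $y$'s, leaving $B$ potentially disjoint from the high-degree vertices; the second Ramsey colouring is precisely what rules this out.
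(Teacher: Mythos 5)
Your proposal is correct in outline and takes a genuinely different route from the paper. Both arguments start from the same reduction: with $A := N_{Q^i_{j_1 k}}(x) \subseteq \cP^{ij_1}$ and $B := N_{Q^i_{j_2 k}}(x) \subseteq \cP^{ij_2}$, one needs at least $\delta|\cP^{ij_1}||\cP^{ij_2}|$ edges of $Q^i_{j_1 j_2}$ between $A$ and $B$, and both use exactly the two outputs of the cleaning phase, namely the lower bound $\sum_y d_{Q^i_{j_1 j_2}}(y)^2 \ge (\tfrac14+\tfrac{\epsilon}{2})|\cP^{ij_1}|^2|\cP^{ij_2}|$ and the cap $|S^i_{j_1 j_2}(r_*+1)| < \delta|\cP^{ij_2}|$. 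The paper then argues by contradiction: assuming fewer than $\delta|\cP^{ij_1}||\cP^{ij_2}|$ edges between $A$ and $B$, it writes $d(y) = (d(y)-d'(y)) + d'(y)$ for $y \in B$ (where $d'(y) = |N(y)\cap A|$), uses $d(y)-d'(y) \le |\cP^{ij_1}\setminus A| \le (\tfrac12-r_*\delta)|\cP^{ij_1}|$ together with the cap, and after splitting the sum over $B$, $\cP^{ij_2}\setminus B$ (minus the cap-exceptional set) and the cap-exceptional set obtains the clean inequality $\tfrac14 + \tfrac{\epsilon}{2} \le (\tfrac12-r_*\delta)(\tfrac12+r_*\delta)^2 + (\tfrac12+r_*\delta)(\tfrac12-r_*\delta)^2 + 6\delta < \tfrac14 + 6\delta$, which contradicts $\delta \ll \epsilon$ with no further tuning. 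Your threshold argument instead reasons forward and requires an extra free parameter $\eta$; what the paper's cross-term decomposition buys is precisely that no such optimisation is needed, and in particular no case distinction on $r_*\delta$ ever appears. The weakest point of your write-up is that you leave the $\eta$-optimisation as a sketch. It does close — a uniform choice such as $\eta = C\delta/\sqrt{\epsilon}$ for a suitable constant $C$ works for all admissible $r_*\delta$, since $|B|/|\cP^{ij_2}| - \mu \gtrsim \tfrac{r_*\delta}{2} + \tfrac{\epsilon}{4r_*\delta} \ge \sqrt{\epsilon/2}$ — so your remark that the tuning ``depends on the unknown value of $r_*\delta$'' is not actually necessary, though your identification of $r_*\delta \approx \sqrt{\epsilon}$ as the tightest case is accurate and a good sanity check. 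You should carry out this computation explicitly (and verify the sign of the numerator when $r_*\delta$ is near its lower bound, where it may be negative and $\mu=0$ trivially) before considering the proof complete; alternatively, adopting the paper's decomposition avoids the issue entirely.
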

		\begin{proof}
			Let $A_{j_1}$ be the neighbors of $x$ in $\mathcal{P}^{ij_1}$, and let $A_{j_2}$ be the neighbors of $x$ in $\mathcal{P}^{ij_2}$.
			By definition of $x$ (and~$S^i_{jk}$), we have that~$|A_{j_1}| \ge (\frac12+r_* \delta) |\mathcal{P}^{ij_1}|, |A_{j_2}| \ge (\frac12+r_* \delta) |\mathcal{P}^{ij_2}|$.
			
			Also, denote $B_{j_1}=\mathcal{P}^{ij_1}\setminus A_{j_1}$ and $B_{j_2}=\mathcal{P}^{ij_2}\setminus A_{j_2}$. Now let $C = B_{j_2}\cap S_{j_1j_2}^i(r_*+1)$.
			By~$(\star)$, we know $|C| < \delta |\mathcal{P}^{ij_2}|$, and by definition of~$S_{j_1j_2}^i(r_*+1)$, we have $d_{Q^i_{j_1 j_2}}(y)  < (\frac12+(r_*+1)\delta)|\mathcal{P}^{ij_1}|$ for all $y \in B_{j_2}\setminus C$.
			
			Note that any edge between~$A_{j_1}$ and~$A_{j_2}$ gives rise to a triangle of the kind the statement asks for, so suppose for the sake of contradiction that there are less than~$\delta |\mathcal{P}^{ij_1}||\mathcal{P}^{ij_2}|$ edges between~$A_{j_1}$ and~$A_{j_2}$.

			Then, when setting~$d'_{Q^i_{j_1 j_2}}(y)=\vert N_{Q^i_{j_1 j_2}}(y)\cap A_{j_1}\vert$ for all~$y \in A_{j_2}$, we have~$\sum_{y \in A_{j_2}} d'_{Q^i_{j_1 j_2}}(y) < \delta |\mathcal{P}^{ij_1}||\mathcal{P}^{ij_2}|$.
			Since each~$d'_{Q^i_{j_1 j_2}}(y) \le |\mathcal{P}^{ij_1}|$, we also know~$\sum_{y\in A_{j_2}} \big(d'_{Q^i_{j_1 j_2}}(y)\big)^2 \le \delta |\mathcal{P}^{ij_1}|^2|\mathcal{P}^{ij_2}|$.
			By our conditions on~$|A_{j_1}|$, we have~$d_{Q^i_{j_1 j_2}}(y) - d'_{Q^i_{j_1 j_2}}(y) \le |B_{j_1}| \le (\frac12-r_* \delta)|\mathcal{P}^{ij_1}|$. Hence
			\begin{align*}
				\sum_{y \in A_{j_2}} d_{Q^i_{j_1 j_2}}(y)^2 &= \sum_{y \in A_{j_2}} \big(d_{Q^i_{j_1 j_2}}(y)-d'_{Q^i_{j_1 j_2}}(y)+d'_{Q^i_{j_1 j_2}}(y)\big)^2 \\
				&\le |A_{j_2}| (\frac12-r_* \delta)^2 |\mathcal{P}^{ij_1}|^2 + \delta |\mathcal{P}^{ij_1}|^2|\mathcal{P}^{ij_2}| + 2 (\frac12-r_*\delta) \delta |\mathcal{P}^{ij_1}|^2  |\mathcal{P}^{ij_2}| \\
				&< |A_{j_2}| (\frac12-r_* \delta)^2 |\mathcal{P}^{ij_1}|^2 + 2\delta |\mathcal{P}^{ij_1}|^2|\mathcal{P}^{ij_2}|\,.
			\end{align*}
			
			Now recalling~$(\star)$ and splitting the sum yields
			\begin{align*}
				&\big(\frac14+\frac{\epsilon}2\big) |\mathcal{P}^{ij_1}|^2 |\mathcal{P}^{ij_2}|\leq \sum_{x \in \cP^{ij_2}} d_{Q^i_{j_1 j_2}}(x)^2\\
				\leq&\sum_{x \in B_{j_2}\setminus C} d_{Q^i_{j_1 j_2}}(x)^2+\sum_{x \in A_{j_2}} d_{Q^i_{j_1 j_2}}(x)^2+\sum_{x \in C} d_{Q^i_{j_1 j_2}}(x)^2\\
				\le &|B_{j_2}| \big(\frac12+(r_*+1)\delta\big)^2 |\mathcal{P}^{ij_1}|^2 + |A_{j_2}| \big(\frac12-r_* \delta\big)^2 |\mathcal{P}^{ij_1}|^2 + 3\delta |\mathcal{P}^{ij_1}|^2|\mathcal{P}^{ij_2}|\,.
			\end{align*}
			Since~$\vert A_{j_2}\vert+\vert B_{j_2}\vert=\vert \cP^{ij_2}\vert$ and~$\vert B_{j_2}\vert\leq(\frac{1}{2}-r_{\star}\delta)\vert\cP^{ij_2}\vert$, the right-hand side is maximised when~$\vert B_{j_2}\vert=(\frac{1}{2}-r_{\star}\delta)\vert\cP^{ij_2}\vert$ and~$\vert A_{j_2}\vert=(\frac{1}{2}+r_{\star}\delta)\vert\cP^{ij_2}\vert$.
			Cancelling~$\vert\mathcal{P}^{ij_1}\vert^2 \vert\mathcal{P}^{ij_2}\vert$ entails
			\begin{align*}
				\frac14+\frac{\epsilon}2 &\le \big(\frac12-r_* \delta\big) \big(\frac12+(r_*+1)\delta\big)^2 + \big(\frac12+r_* \delta\big) \big(\frac12-r_* \delta\big)^2 + 3\delta  \\
				&\le \big(\frac12-r_* \delta\big) \big(\frac12+r_* \delta\big)^2 + \big(\frac12+r_* \delta\big) \big(\frac12-r_* \delta\big)^2 + 6\delta  \\
				&< \frac14 + 6 \delta \,.
			\end{align*}
			Since $\delta \ll \epsilon$, this yields a contradiction.
		\end{proof}
		
		In our proof, we will iteratively prepare a number of `rows', each of which is a subgraph of a~$Q^r$, by finding many triangles that are `compatible' with each other.
		By definition of~$Q^r$, for each edge in~$e\in E(Q^r)$, there exists a small subset~$X$ in some vertex class of~$Q^{r'}$ for some~$r'>r$, such that every~$x\in X$ together with~$e$ forms an edge in~$\cA$.
        This subset~$X$ can be thought of as the `projection' of the row~$r$ onto the row~$r'$.
		After preparing many rows, we will show that some of these projections have to intersect.
		From this, the reduced image of~$F_{\star}$ will arise (see Figure~\ref{fig:Fperrows}).
		
		The following lemma encapsulates the preparation of one row which is `paid for' by shrinking the index set.
		\begin{lemma}\label{lemma:rowprep}
			Let~$I\subseteq [m]$ with~$\vert I\vert> \frac{2}{\delta^2}$,~$m\in I$, and~$r=\min I$.
			Then there are~$J\subseteq I\setminus r$, with~$m\in J$ and~$\vert J\vert\geq\delta^2 \vert I\vert$,~$x\in \cP^{rm}$,~$y\in \cP^{rr'}$, where~$r'=\min J$, and~$z_j\in \cP^{rj}$ for all~$j\in J\setminus \{r',m\}$ such~$xz_jy$ forms a triangle in~$Q^r$.
		\end{lemma}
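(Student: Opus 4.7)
The plan is to apply Lemma~\ref{lemma:quadmanytriangles} with $i=r$ and $k=m$ fixed, for many choices of the middle index, and then to extract a single $x\in\cP^{rm}$ and a single $y\in\cP^{rr'}$ that work simultaneously for all of those choices via two successive averaging arguments. Each averaging step costs a factor of~$\delta$, which accounts for why the resulting index set~$J$ has size $\sim\delta^2|I|$.

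\emph{Choosing $x$ and the set of middle indices.} By~$(\star)$, for every $j\in I\setminus\{r,m\}$ the set $S_{jm}^r(r_*)\subseteq\cP^{rm}$ has size at least $\delta|\cP^{rm}|$. Double counting pairs $(x,j)$ with $x\in S_{jm}^r(r_*)$ produces some $x\in\cP^{rm}$ that lies in $S_{jm}^r(r_*)$ for at least $\delta(|I|-2)$ indices $j\in I\setminus\{r,m\}$; let $J_0$ denote this set of indices and set $r':=\min J_0$ (which is well defined and distinct from~$m$, since $|I|>2/\delta^2$ forces $|J_0|\geq 1$ and $m\notin J_0$). For each $j\in J_0\setminus\{r'\}$ the indices satisfy $r<r'<j<m$ and $x\in S_{r'm}^r(r_*)\cap S_{jm}^r(r_*)$, so Lemma~\ref{lemma:quadmanytriangles} (with $i=r$, $j_1=r'$, $j_2=j$, $k=m$) yields at least $\delta|\cP^{rr'}||\cP^{rj}|$ triangles $xz_jy$ in~$Q^r$ with $z_j\in\cP^{rj}$ and $y\in\cP^{rr'}$. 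In particular, the set
\[Y_j:=\{y\in\cP^{rr'}:\exists\,z_j\in\cP^{rj}\text{ with }xz_jy\text{ a triangle in }Q^r\}\]
satisfies $|Y_j|\geq\delta|\cP^{rr'}|$, since each $y$ accounts for at most $|\cP^{rj}|$ triangles.

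\emph{Choosing $y$ and finishing.} A second averaging, this time over~$\cP^{rr'}$, produces $y\in\cP^{rr'}$ that lies in $Y_j$ for a set $J_1\subseteq J_0\setminus\{r'\}$ of indices with $|J_1|\geq\delta(|J_0|-1)$. Define $J:=J_1\cup\{r',m\}$. Then $m,r'\in J$, $\min J=r'$ (as all elements of $J_1$ exceed~$r'$ by construction), and for every $j\in J\setminus\{r',m\}=J_1$ we may select $z_j\in\cP^{rj}$ such that $xz_jy$ is a triangle in~$Q^r$. The size estimate
\[|J|\geq\delta\bigl(\delta(|I|-2)-1\bigr)+2=\delta^2|I|-2\delta^2-\delta+2\geq\delta^2|I|\]
holds since $|I|>2/\delta^2$ and $\delta\ll 1$.

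The only care point is checking that the two averaging losses are absorbed by the hypothesis $|I|>2/\delta^2$, which is precisely what allows the first averaging to produce a nonempty $J_0$ large enough for the second averaging to still deliver a set of size~$\geq\delta^2|I|-2$. No further structural difficulty arises: the substantial counting work is already encapsulated in Lemma~\ref{lemma:quadmanytriangles}, and the present lemma is essentially its iterated pigeonhole consequence.
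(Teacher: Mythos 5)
Your proof is correct and follows essentially the same route as the paper's: a first averaging over $\cP^{rm}$ to find $x$ that lies in many $S^r_{jm}(r_*)$, then an application of Lemma~\ref{lemma:quadmanytriangles} followed by a second averaging over $\cP^{rr'}$ to find $y$. The only cosmetic difference is that the paper phrases the second averaging in terms of the neighbourhood sets $A_j = N_{Q^r}(x)\cap\cP^{rj}$ and the sets $D_j\subseteq A_{r'}$ of vertices incident to an $A_{r'}$--$A_j$ edge, while you pass directly to the projection sets $Y_j\subseteq\cP^{rr'}$; these are the same objects and yield the same bound.
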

		\begin{proof}
			First, we find~$x\in \cP^{rm}$.
			As~$|S_{jm}^r(r_{*})| \ge \delta |\mathcal{P}^{rm}|$ for all $j \in I\setminus\{m, r\}$, it follows that there is some set $I' \subseteq I\setminus\{r, m\}$ such that $|I'| \ge \delta(|I|-2)$ and some $x \in \bigcap_{j \in I'} S_{jm}^r(r_*)$.
			
			Let $r' = \min I'$; next, we find $y \in \cP^{rr'}$. For any $j \in I'$, let $A_j$ be the set of neighbours of $x$ in $\cP^{rj}$.
			Since $x \in S_{jm}^r(r_*) \cap S_{r'm}^r(r_*)$, by Lemma \ref{lemma:quadmanytriangles}, there are at least $\delta |\cP^{rj}||\cP^{rr'}|$ edges in $Q^r$ between $A_j$ and $A_{r'}$.
			For~$j\in I'\setminus\{r'\}$, let $D_j$ be the set of vertices in $A_{r'}$ that are connected by some edge in~$Q^r$ to a vertex in $A_j$.
			Then,~$|D_j| \ge \frac{\delta |\cP^{rj}||\cP^{rr'}|}{|\cP^{rj}|}=\delta |\cP^{rr'}|$ and double-counting yields that there is some vertex $y \in A_{r'}$ that lies in $D_j$ for at least $\frac{\delta |\cP^{rr'}|(|I'|-1)}{|\cP^{rr'}|}=\delta(|I'|-1)$ such $j \in I'$.
			Denote the set of such~$j \in I'$ by~$I''$ and let~$J=I''\cup\{m,r'\}$.
			
			Now~$r'=\min J$ and for all~$j \in I''=J\setminus\{r',m\}$, we have that~$y \in D_j$.
			Hence, there is some vertex~$z_j \in A_j$ such that~$z_j y$ is an edge in~$Q^r$.
			Since~$z_j\in A_j$ and~$y\in A_{r'}$, we know that~$xz_j$ and~$xy$ are also edges in~$Q^r$. Thus, for all~$j \in J\setminus \{r',m\}$,~$xz_jy$ is a triangle in~$Q^r$.
			The lower bound on~$|J|$ follows as~$|J| \ge \delta (|I'|-1)+2\geq\delta^2(|I|-2)-\delta+2 \ge \delta^2 |I|$.
		\end{proof}
		
		Now, we apply Lemma~\ref{lemma:rowprep} iteratively, in each step preparing a row and shrinking the index set.
		First, let~$I_0=[m]$, and~$r_1=\min I_0$.
		Lemma~\ref{lemma:rowprep} provides some~$I_1\subseteq I_0\setminus r_1$ with~$m\in I_1$ and~$\vert I_1\vert\geq\delta^2 \vert I_0\vert$,~$x_1\in\cP^{r_1m}$,~$y_1\in\cP^{r_1r_2}$, where~$r_2=\min I_1$, and~$z_{1j}\in\cP^{r_1j}$ for all~$j\in I_1\setminus\{r_2,m\}$ such that~$x_1z_{1j}y_1$ forms a triangle in~$Q^{r_1}$.
		
		Next, we apply Lemma~\ref{lemma:rowprep} again, but this time to~$I_1$.
		Generally, assume that for some~$i\in[\frac{2}{\varepsilon^2}]$, we have defined sets~$I_0,\dots,I_i\subseteq[m]$, indices~$r_1,\dots r_{i+1}\in[m]$, and for all~$j\in[i]$ and~$k\in I_j\setminus\{r_{j+1},m\}$, vertices~$x_j\in\cP^{r_{j}m}$,~$y_j\in\cP^{r_{j}r_{j+1}}$, and~$z_{jk}\in\cP^{r_{j}k}$ such that the following holds.
		For all~$j\in[i]$, we have~$I_{j}\subseteq I_{j-1}\setminus r_{j}$,~$m\in I_{j}$,~$r_{j+1}=\min I_j$, and~$\vert I_{j}\vert\geq\delta^2\vert I_{j-1}\vert$, and further for all~$k\in I_j\setminus\{r_{j+1},m\}$,~$x_jz_{jk}y_j$ forms a triangle in~$Q^{r_{j}}$.
		Then applying Lemma~\ref{lemma:rowprep} to~$I_i$ yields~$I_{i+1}\subseteq I_{i}\setminus r_{i+1}$ with~$m\in I_{i+1}$ and~$\vert I_{i+1}\vert\geq\delta^2\vert I_i\vert$,~$x_{i+1}\in\cP^{r_{i+1}m}$,~$y_{i+1}\in\cP^{r_{i+1}r_{i+2}}$, where~$r_{i+2}=\min I_{i+1}$, and~$z_{{i+1}j}\in\cP^{r_{i+1}j}$ for all~$j\in I_{i+1}\setminus\{r_{i+2},m\}$ such that~$x_{i+1}z_{{i+1}j}y_{i+1}$ forms a triangle in~$Q^{r_{i+1}}$.
		
		Note that we can perform this process~$\frac{2}{\varepsilon^2}+1$ times since in the last step (and thus in all steps prior) we have that~$\vert I_{\frac{2}{\varepsilon^2}}\vert\geq (\delta^2)^{\frac{2}{\varepsilon^2}} m> \frac{2}{\delta ^2}$ (using the hierarchy~\eqref{eq:hierarchy}) and can thus we can still construct the set~$I_{\frac{2}{\varepsilon^2}+1}$ with~$\vert I_{\frac{2}{\varepsilon^2}+1}\vert\geq 3$.
		
		After this iteration procedure, let the maximum element of~$I_{\frac{2}{\varepsilon^2}+1}\setminus m$ be $m'$.
		For all $i \in[\frac{2}{\varepsilon^2}+1]$, since $I_{\frac{2}{\varepsilon^2}+1} \subseteq I_i$, there is some triangle $x_i z_{im'} y_i$ in $Q^{r_{i}}$ and we set~$e_i=x_iz_{im'}$.
		Note that~$e_i$ is between $\cP^{r_i m}$ and $\cP^{r_i m'}$.
		Since~$e_i\in E(Q^{r_i})$, we know that there is a set~$H(e_i)\subseteq \cP^{m'm}$ with~$\vert H(e_i)\vert\geq\varepsilon^2\vert\cP^{m'm}\vert$ such that~$x_iz_{im'}v\in E(\cA^{r_im'm})$ for every~$v\in H(e_i)$.
		Thus, there are at least three indices~$i,j,k\in[\frac{2}{\varepsilon^2}+1]$ such that~$H(e_i)\cap H(e_j)\cap H(e_k)\neq\emptyset$. Without loss of generality, assume $i<j<k$.
		
		Let~$v\in H(e_i)\cap H(e_j)\cap H(e_k)$ and note that~$r_k>r_j \ge r_{i+1}$ and that since $\vert I_{\frac{2}{\varepsilon^2}+1} \vert \ge 3$ and $m = \max I_{\frac{2}{\varepsilon^2}+1}$, $m' = \max I_{\frac{2}{\varepsilon^2}+1} \setminus m$, we know $r_k < m'$.
		Thus, there are~$z_{im'}\in\cP^{r_im'}$ and~$z_{i r_k}\in\cP^{r_i r_k}$ such that~$x_iz_{im'}y_i$ is a triangle in~$Q^{r_i}$ and~$z_{i r_k}y_i\in E(Q^{r_i})$ (we do not need the other edges in the respective triangle).
		Now these edges imply the existence of vertices $u_1 \in \mathcal{P}^{r_{i+1} m}, u_2 \in \mathcal{P}^{r_{i+1} m'}, u_3 \in \mathcal{P}^{r_{i+1} r_k}$ such that $x_{i}y_{i}u_1 \in E(\mathcal{A}^{r_ir_{i+1}m})$,~$z_{im'}y_{i}u_2\in E(\mathcal{A}^{r_ir_{i+1}m'})$, and~$z_{i r_k}y_{i}u_3 \in E(\mathcal{A}^{r_ir_{i+1}r_k})$.
		Together with the edges~$x_iz_{im'}v\in E(\cA^{r_im'm})$ and~$x_kz_{km'}v\in E(\cA^{r_km'm})$, this yields a reduced embedding of~$F_{\star}$. This iteration process and the resulting embedding is illustrated in Figure \ref{fig:Fperrows}.
		
		Indeed, as shown in Figure \ref{fig:Finreduced}, the edges mentioned show that when setting $\lambda(1)=r_i$, $\lambda(2)=r_{i+1}$, $\lambda(3)=m$, $\lambda(4)=m'$, and~$\lambda(5)=r_k$ and~$\varphi(12)=y_i$, $\varphi(13)=x_i$, $\varphi(14)=z_{im'}$, $\varphi(15)=z_{ir_k}$, $\varphi(23)=u_1$, $\varphi(24)=u_2$, $\varphi(25)=u_3$, $\varphi(34)=v$, $\varphi(35)=x_k$, and~$\varphi(45)=z_{km'}$, then~$(\lambda,\varphi)$ is a reduced embedding of~$F_{\star}$ into~$\cA$.
		
		\begin{figure}
			\centering
			\begin{tikzpicture}[xscale=0.9,yscale=0.75]
				
				\coordinate (x_i) at (0, 10); 
				\coordinate (z_im') at (2, 9.5);
				\coordinate (z_irk) at (4, 10.5); 
				\coordinate (y_i) at (6, 10);
				
				\coordinate (u_1) at (0, 7);
				\coordinate (u_2) at (2, 7);
				\coordinate (u_3) at (4, 7);
				
				\coordinate (x_k) at (0, 4);
				\coordinate (z_km') at (2, 4);
				
				\coordinate (v) at (0, 1);
				
				\begin{pgfonlayer}{front}
					
					\foreach \i in {x_i, z_im', z_irk, y_i, u_1, u_2, u_3, x_k, z_km', v}
					\fill  (\i) circle (2pt);
					
					\node at (-0.5, 10) {$x_i$};
					\node at (1.5, 9.5) {$z_{im'}$};
					\node at (3.5, 10.5) {$z_{ir_k}$};
					\node at (6.5, 10) {$y_i$};
					
					\node at (-0.5, 7) {$u_1$};
					\node at (1.5, 7) {$u_2$};
					\node at (3.5, 7) {$u_3$};
					
					\node at (-0.5, 4) {$x_k$};
					\node at (2.5, 4) {$z_{km'}$};
					
					\node at (-0.5, 1) {$v$};
					
					\node at (0, 11.5) {$\mathcal{P}^{r_i m}$};
					\node at (2, 11.5) {$\mathcal{P}^{r_i m'}$};
					\node at (4, 11.5) {$\mathcal{P}^{r_i r_k}$};
					\node at (6, 11.5) {$\mathcal{P}^{r_i r_{i+1}}$};
					
					\node at (0, 8.5) {$\mathcal{P}^{r_{i+1} m}$};
					\node at (2, 8.5) {$\mathcal{P}^{r_{i+1} m'}$};
					\node at (4, 8.5) {$\mathcal{P}^{r_{i+1} r_k}$};
					
					\node at (0, 5.5) {$\mathcal{P}^{r_k m}$};
					\node at (2, 5.5) {$\mathcal{P}^{r_k m'}$};
					
					\node at (0, 2.5) {$\mathcal{P}^{m m'}$};
					
					\node at (-2, 10) {$\text{Row }r_i$};
					\node at (-2, 7) {$\text{Row }r_{i+1}$};
					\node at (-2, 4) {$\text{Row }r_k$};
					\node at (-2, 1) {$\text{Row }m'$};
					
				\end{pgfonlayer}
				\begin{pgfonlayer}{background}
					\draw[black!40!white, line width=2pt] (0,9) -- (0,11);
					\draw[black!40!white, line width=2pt] (2,9) -- (2,11);
					\draw[black!40!white, line width=2pt] (4,9) -- (4,11);
					\draw[black!40!white, line width=2pt] (6,9) -- (6,11);
					
					\draw[black!40!white, line width=2pt] (0,6) -- (0,8);
					\draw[black!40!white, line width=2pt] (2,6) -- (2,8);
					\draw[black!40!white, line width=2pt] (4,6) -- (4,8);
					
					\draw[black!40!white, line width=2pt] (0,3) -- (0,5);
					\draw[black!40!white, line width=2pt] (2,3) -- (2,5);
					
					\draw[black!40!white, line width=2pt] (0,0) -- (0,2);
					
					\draw[blue!60!blue, line width=2pt] (x_i) -- (z_im');
					\draw[blue!60!blue, line width=2pt] (x_i) -- (y_i);
					\draw[blue!60!blue, line width=2pt] (z_im') -- (y_i);
					\draw[blue!60!blue, line width=2pt] (z_irk) -- (y_i);
					\draw[blue!60!blue, line width=2pt] (x_k) -- (z_km');
					
				\end{pgfonlayer}
				\qedge{(y_i)}{(u_3)}{(z_irk)}{4.5pt}{1.5pt}{red!70!black}{red!70!black,opacity=0.2};
				\qedge{(y_i)}{(u_2)}{(z_im')}{4.5pt}{1.5pt}{red!70!black}{red!70!black,opacity=0.2};
				\qedge{(y_i)}{(u_1)}{(x_i)}{4.5pt}{1.5pt}{red!70!black}{red!70!black,opacity=0.2};
				\qedge{(z_km')}{(v)}{(x_k)}{4.5pt}{1.5pt}{red!70!black}{red!70!black,opacity=0.2};
				\qedge{(z_im')}{(v)}{(x_i)}{4.5pt}{1.5pt}{red!70!black}{red!70!black,opacity=0.2};
				
			\end{tikzpicture}
			\caption{$F_{\star}$ embedded across several prepared rows in the reduced hypergraph~$\cA$. Blue lines represent edges in~$Q^r$.}
			\label{fig:Fperrows}
		\end{figure}
		
		\begin{figure}
			\centering
			\begin{tikzpicture}[scale=1.5]
				\coordinate (rk) at (-2, 0);
				\coordinate (ri) at (0, 1); 
				\coordinate (m) at (0, -1);
				\coordinate (ri') at (2, 1);
				\coordinate (m') at (2, -1);
				
				\coordinate (xi) at (0, 0.2);
				\coordinate (yi) at (0.8, 1);
				\coordinate (z_im') at (0.8, 0.2);
				\coordinate (z_irk) at (-1, 0.5);
				\coordinate (u1) at (1.3, 0.3);
				\coordinate (u2) at (2, 0);
				\coordinate (u3) at (-0.7, 1.6);
				\coordinate (v) at (1, -1);
				\coordinate (xk) at (-1, -0.5);
				\coordinate (z_km') at (-0.7, -1.6);
				\begin{pgfonlayer}{front}
					\foreach \i in {rk, ri, m, ri', m', xi, yi, z_im', z_irk, u1, u2, u3, v, xk, z_km'}
					\fill  (\i) circle (1pt);
					
					\node at (-2.2,0) {$r_k$};
					\node at (0, 1.2) {$r_i$};
					\node at (0, -1.2) {$m$};
					\node at (2.2, 1.2) {$r_{i+1}$};
					\node at (2.2, -1.2) {$m'$};
					
					\node at (-0.2, 0.2) {$x_i$};
					\node at (0.8, 1.2) {$y_i$};
					\node at (0.8, 0.4) {$z_{im'}$};
					\node at (-1, 0.7) {$z_{i r_k}$};
					\node at (1.3, 0.5) {$u_1$};
					\node at (2.2, 0) {$u_2$};
					\node at (-0.7, 1.8) {$u_3$};
					\node at (1, -1.2) {$v$};
					\node at (-1, -0.7) {$x_k$};
					\node at (-0.7, -1.8) {$z_{km'}$};
				\end{pgfonlayer}
				
				\draw[black!40!white, line width=2pt] (ri) -- (ri');
				\draw[black!40!white, line width=2pt] (ri) -- (m);
				\draw[black!40!white, line width=2pt] (ri) -- (m');
				\draw[black!40!white, line width=2pt] (rk) -- (ri);
				\draw[black!40!white, line width=2pt] (rk) -- (m);
				\draw[black!40!white, line width=2pt] (ri') -- (m');
				\draw[black!40!white, line width=2pt] (m') -- (m);
				\draw[black!40!white, line width=2pt] (m) -- (ri');
				
				\draw[black!40!white, line width=2pt] (rk) to [curve through={(u3)}] (ri');
				\draw[black!40!white, line width=2pt] (rk) to [curve through={(z_km')}] (m');
				
				\draw[blue!60!blue, line width=2pt] (xi) -- (z_im');
				\draw[blue!60!blue, line width=2pt] (xi) -- (yi);
				\draw[blue!60!blue, line width=2pt] (z_im') -- (yi);
				\draw[blue!60!blue, line width=2pt] (z_irk) -- (yi);
				\draw[blue!60!blue, line width=2pt] (xk) -- (z_km');
				
				\qedge{(yi)}{(z_irk)}{(u3)}{4.5pt}{1.5pt}{red!70!black}{red!70!black,opacity=0.2};
				\qedge{(yi)}{(u2)}{(z_im')}{4.5pt}{1.5pt}{red!70!black}{red!70!black,opacity=0.2};
				\qedge{(yi)}{(u1)}{(xi)}{4.5pt}{1.5pt}{red!70!black}{red!70!black,opacity=0.2};
				\qedge{(z_km')}{(xk)}{(v)}{4.5pt}{1.5pt}{red!70!black}{red!70!black,opacity=0.2};
				\qedge{(z_im')}{(v)}{(xi)}{4.5pt}{1.5pt}{red!70!black}{red!70!black,opacity=0.2};
				
			\end{tikzpicture}
			\caption{The reduced image of~$F_{\star}$ in the reduced hypergraph~$\cA$. As before, the gray lines between two indices~$i$ and~$j$ represent the vertex class~$\cP^{ij}$.}
			\label{fig:Finreduced}
		\end{figure}
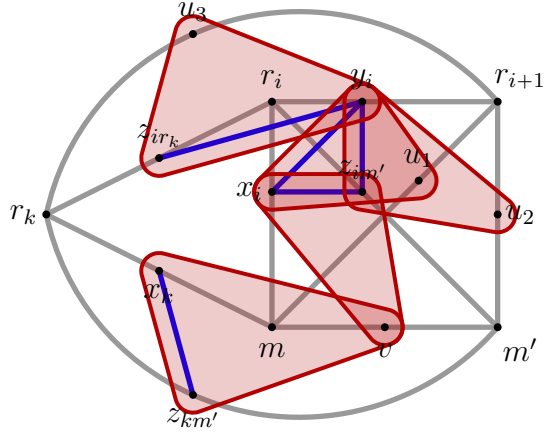
	\end{proof}
	
	\section{Proof of Theorem~\ref{theorem:onevtxglue}}\label{sec:proofglueing}
	
	\begin{proof}
		Let~$\varepsilon>0$ be given, without loss of generality,~$\varepsilon\ll 1$, and choose~$M,M_*,m\in\mathds{N}$ and~$\delta>0$ such that
		\begin{align}\label{eq:hierarchy2}
			M^{-1}\ll M_*^{-1}\ll m^{-1}\ll\delta\ll\varepsilon\ll 1\,.
		\end{align}
		Then let~$\cA$ be a~$(\frac{1}{4}+\varepsilon)$-dense reduced hypergraph with index set~$[M]$, vertex classes~$\cP^{ij}$ and constituents~$\cA^{ijk}$.
		We need to show that there are four indices~$i_1,i_2,i_3,i_4\in [M]$ and vertices~$x_{\alpha\beta}\in\cP^{i_{\alpha}i_{\beta}}$ for~$\alpha\beta\in[4]^{(2)}$ as well as~$x_{23}'\in\cP^{i_2i_3}$ and~$x_{24}'\in\cP^{i_2i_4}$ such that~$x_{12}x_{13}x_{23}\in E(\cA^{i_1i_2i_3})$, $x_{12}x_{14}x_{24}\in E(\cA^{i_1i_2i_4})$, $x_{13}x_{14}x_{34}\in E(\cA^{i_1i_3i_4})$, and~$x_{23}'x_{24}'x_{34}\in E(\cA^{i_2i_3i_4})$.
		
		We proceed as in the last section until Lemma~\ref{lemma:rowprep}.
		Instead of this lemma, we need the following variant.
		
		\begin{lemma}\label{lem:rowprepglue}
			Let~$m_1,m_2\in\mathds{N}$ with~$m_1\geq \frac{m_2+1}{\delta^{m_2}}$ and let~$I\subseteq [m]$ with~$\vert I\vert\geq m_1$,~$m\in I$ and~$r=\min I$.
			Then there are~$J\subseteq I\setminus r$ with~$m\in J$ and~$\vert J\vert\geq m_2$,~$x\in\cP^{rm}$ and~$z_{k}\in\cP^{rk}$ for all~$k\in J\setminus\{r',m\}$, where~$r'=\min J$, such that for all~$jk\in (J\setminus m)^{(2)}$ with~$j<k$, there is some~$y\in\cP^{rj}$ such that~$xz_{k}y$ forms a triangle in~$Q^r$.
		\end{lemma}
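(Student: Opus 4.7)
The first moves will mirror the opening of the proof of Lemma~\ref{lemma:rowprep}. Averaging on $|S_{jm}^r(r_*)|\ge \delta|\cP^{rm}|$ produces $x\in\cP^{rm}$ and a set $I'\subseteq I\setminus\{r,m\}$ with $|I'|\ge\delta(|I|-2)$ and $x\in\bigcap_{j\in I'} S_{jm}^r(r_*)$. Setting $A_k = N_{Q^r_{km}}(x)$ (so $|A_k|\ge(\tfrac12+r_*\delta)|\cP^{rk}|$) and applying Lemma~\ref{lemma:quadmanytriangles}, the set
\[
D_j^{(k)} := \{z\in A_k : N_{Q^r_{jk}}(z)\cap A_j \neq \emptyset\}
\]
satisfies $|D_j^{(k)}|\ge\delta|\cP^{rk}|$ for every pair $j<k$ in $I'$.

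The task then reduces to locating a chain $j_1<\cdots<j_t$ of length $t=m_2-1$ inside $I'$, together with vertices $z_{j_s}\in A_{j_s}$ for $s\ge 2$ satisfying $z_{j_s}\in D_{j_i}^{(j_s)}$ for every $i<s$. Indeed, taking $J=\{j_1,\dots,j_t\}\cup\{m\}$ with $r'=j_1$ will then settle the lemma: for any $j=j_i<k=j_s$ in $J\setminus m$, picking $y$ to be any neighbour of $z_{j_s}$ in $A_{j_i}$ via $Q^r_{j_i j_s}$---which exists precisely because $z_{j_s}\in D_{j_i}^{(j_s)}$---yields the required triangle $xz_{j_s}y$ in $Q^r$.

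I will construct the chain by a top-down induction on $t$. Given a subset $J^*\subseteq I'$, set $k^*=\max J^*$ and average the quantity $|\{j\in J^*\cap[k^*-1]: z\in D_j^{(k^*)}\}|$ over $z\in A_{k^*}$; the double sum is at least $\delta(|J^*|-1)|\cP^{rk^*}|$, so some $z^*\in A_{k^*}$ picks out a set $G^*\subseteq J^*\cap[k^*-1]$ of at least $\delta(|J^*|-1)$ good indices. Setting $z_{j_t}=z^*$ and recursing on $G^*$ furnishes the rest of the chain. The recurrence $N(t,\delta)\le N(t-1,\delta)/\delta+1$ with $N(1,\delta)=1$ gives $N(t,\delta)\le 2/\delta^{t-1}$ for $\delta\le\tfrac12$; combined with $|I'|\ge\delta m_1-2\delta$ and the hypothesis $m_1\ge(m_2+1)/\delta^{m_2}$, this leaves ample room to recurse $m_2-1$ times.

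I expect the only delicate point to be checking that the bound $|D_j^{(k)}|\ge\delta|\cP^{rk}|$ survives restriction to $G^*$ at each step of the recursion, but this is automatic since $D_j^{(k)}$ depends only on the fixed vertex $x$ and on the ambient constituent $\cA^{rjk}$, not on the index subset currently under consideration.
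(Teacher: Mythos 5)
Your proof is correct and follows essentially the same strategy as the paper: average over the sets $S_{jm}^r(r_*)$ to fix $x$ and $I'$, invoke Lemma~\ref{lemma:quadmanytriangles} to get the dense bipartite patterns $D_j^{(k)}$, and then greedily peel off the maximum index while pigeonholing a common $z^*$ at each step, with the recurrence $N(t)\le N(t-1)/\delta+1$ comfortably covered by the hypothesis $m_1\ge (m_2+1)/\delta^{m_2}$. The only cosmetic difference is that the paper carries the already-chosen chain elements $\{m,k_1,\dots,k_i\}$ along inside the shrinking index set, whereas you keep the chain separate from the surviving ``good'' set $G^*$; the content and the resulting bookkeeping are the same.
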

		
		\begin{proof}
			We will choose the~$z_{k}$ iteratively and with each choice the set of considered indices will shrink.
			As~$|S_{jm}^r(r_{*})| \ge \delta |\mathcal{P}^{rm}|$ for all $j \in I\setminus\{m, r\}$, it follows that there is some set $I' \subseteq I\setminus\{r, m\}$ such that $|I'| \ge \delta(|I|-2)$ and some $x \in \bigcap_{j \in I'} S_{jm}^r(r_*)$.
			For any $k \in I'$, let $A_k$ be the set of neighbours of $x$ in $\cP^{rk}$.
			Since for all~$jk\in I'^{(2)}$ with~$j<k$, we have~$x \in S_{km}^r(r_*) \cap S_{jm}^r(r_*)$, by Lemma \ref{lemma:quadmanytriangles}, there are at least $\delta |\cP^{rk}||\cP^{rj}|$ edges in $Q^r$ between $A_k$ and $A_j$.
			Let now~$k_1=\max I'$ and let~$D_j$ be the set of vertices in~$\cP^{rk_1}$ which lie in a triangle between~$A_{k_1}$ and~$A_j$.
			Note that~$\vert D_j\vert\geq \delta\vert\cP^{rk_1}\vert$.
			Therefore, there is some set~$I_1'\subseteq I'\setminus k_1$ with~$\vert I_1'\vert\geq \delta(\vert I'\vert-1)$ and some~$z_{k_1}\in\bigcap_{j\in I_1'}D_j$.
			Set~$I_1=I_1'\cup\{k_1,m\}$ and note that~$\vert I_1\vert\geq \delta^2\vert I\vert$ and that for all~$j\in I_1$ with~$j<k_1$, there is some~$y\in\cP^{rj}$ such that~$xz_{k_1}y$ forms a triangle in~$Q^r$.
			
			Now suppose that we have found indices~$m>k_1>\dots>k_i$ in~$I$ for some~$i<m_2$, a set~$I_i\subseteq I\setminus r$ with~$\{m,k_1,\dots,k_i\}\subseteq I_i$ and~$\vert I_i\vert\geq \delta^{i+1}\vert I\vert$, and vertices~$z_k\in\cP^{rk}$ for all~$k\in\{k_1,\dots,k_i\}$ such that for all~$k\in\{k_1,\dots,k_i\}$ and~$j\in I_i$ with~$j<k$, there is some~$y\in\cP^{rj}$ such that~$xz_ky$ forms a triangle in~$Q^r$.
			Then let~$I_{i+1}'=I_i\setminus\{m,k_1,\dots,k_i\}$ and note that~$\vert I_{i+1}'\vert\geq\delta^{i+1} m_1-(i+1)\geq1$.
			Further, let~$k_{i+1}=\max I_{i+1}'$ and for~$j\in I_{i+1}'\setminus k_{i+1}$, let~$D_j'$ be the set of vertices in~$\cP^{rk_{i+1}}$ which lie in a triangle between~$A_{k+1}$ and~$A_j$.
			As before, we have that~$\vert D_j'\vert\geq \delta\vert\cP^{rk_{i+1}}\vert$ and thus, there is some set~$I_{i+1}''\subseteq I_{i+1}'\setminus k_{i+1}$ and some~$z_{k_{i+1}}\in\bigcap_{j\in I_{i+1}''}D_j'$ (if~$I_{i+1}''=\emptyset$, pick~$z_{k_{i+1}}\in\cP^{rk_{i+1}}$ arbitrarily).
			Set~$I_{i+1}=I_{i+1}''\cup\{m,k_1,\dots k_{i+1}\}$ and note that $$\vert I_{i+1}\vert\geq \delta(\vert I_{i}\vert-(i+2))+i+2\geq \delta \vert I_i\vert\geq\delta^{i+2}\vert I\vert\,,$$ and that for all~$k\in\{k_1,\dots k_{i+1}\}$ and~$j\in I_{i+1}$ with~$j<k$, there is some~$y\in\cP^{rj}$ such that~$xz_ky$ forms a triangle in~$Q^r$.
			
			Eventually, we set~$J=\{m,k_1,\dots,k_{m_2}\}$ and note that~$J$,~$x$, and~$z_k$ for~$k\in J$ have the asserted properties.
		\end{proof}
		
		Similarly as in the last section, we will now conclude the proof by an iteration process, this time repeatedly applying Lemma~\ref{lem:rowprepglue}.
		Let~$m_1,\dots m_{\frac{1}{\varepsilon^2}+1}\in \mathds{N}$ with
		\begin{align}\label{eq:hierarchym}
			m^{-1}=m_1^{-1}\ll m_2^{-1}\ll\dots\ll m_{\frac{1}{\varepsilon^2}+1}\ll\delta\,,
		\end{align}
		and note that this is compatible with the hierarchy~\eqref{eq:hierarchy2}.
		Set~$I_0=[m]$,~$r_1=1$ (note that we will use~$I_i$ here in a different sense than in the proof of Lemma~\ref{lem:rowprepglue} but essentially in the same sense as in the proof of Theorem~\ref{theorem:main}).
		Generally, assume that we have defined, for some~$i\in[\frac{1}{\varepsilon^2}]$, sets~$I_0,\dots,I_i\subseteq[m]$, indices~$r_1,\dots,r_{i+1}\in[m]$, and for each~$i'\in[i]$ and~$k\in I_{i'}\setminus \{r_{i'+1},m\}$, vertices~$x_{i'}\in\cP^{r_{i'}m}$ and~$z_{i'k}\in\cP^{r_{i'}k}$ such that the following holds.
		For all~$i'\in[i]$, we have~$I_{i'}\subseteq I_{i'-1}\setminus r_{i'}$,~$m\in I_{i'}$,~$r_{i'+1}=\min I_{i'}$, and~$\vert I_{i'}\vert\geq m_{i'}$, and further for all~$jk\in (I_{i'}\setminus m)^{(2)}$ with~$j<k$, there is some~$y\in \cP^{r_{i'}j}$ such that~$x_{i'}z_{i'k}y$ forms a triangle in~$Q^{r_{i'}}$.
		
		Then we apply Lemma~\ref{lem:rowprepglue} with~$I_i$ instead of~$I$, and~$m_i$ and~$m_{i+1}$ in place of~$m_1$ and~$m_2$, respectively.
		This yields a set~$I_{i+1}\subseteq I_i\setminus r_{i+1}$ with~$m\in I_{i+1}$ and~$\vert I_{i+1}\vert\geq m_{i+1}$, a vertex~$x_{i+1}\in\cP^{r_{i+1}m}$, and for all~$k\in I_{i+1}\setminus\{r_{i+2},m\}$, where~$r_{i+2}=\min I_{i+1}$, vertices~$z_{i+1k}\in\cP^{r_{i+1}k}$ such that for all~$jk\in (I_{i+1}\setminus m)^{(2)}$ with~$j<k$, there is some~$y\in \cP^{r_{i+1}j}$ such that~$x_{i+1}z_{i+1k}y$ forms a triangle in~$Q^{r_{i+1}}$.
		
		Due to~\eqref{eq:hierarchym}, we can proceed in this manner until~$i=\frac{1}{\varepsilon^2}+1$, that is, we can construct a set~$I_{\frac{1}{\varepsilon^2}+1}$ with~$\vert I_{\frac{1}{\varepsilon^2}+1}\vert\geq m_{\frac{1}{\varepsilon^2}+1}$ in this manner.
		Let~$m'=\max I_{\frac{1}{\varepsilon^2}+1}\setminus m$.
		During the iteration, we found for each~$i\in[\frac{1}{\varepsilon^2}+1]$ vertices~$x_i$ and~$z_{im'}$ such that, in particular,~$x_iz_{im'}\in E(Q^{r_i})$.
		We set~$e_i=x_iz_{im'}$.
		By definition of~$Q^{r_i}$, there is a set~$H(e_i)\subseteq\cP^{m'm}$ with~$\vert H(e_i)\vert\geq\varepsilon^2\vert\cP^{m'm}\vert$ such that~$x_iz_{im'}v\in E(\cA^{r_im'm})$ for every~$v\in H(e_i)$.
		Thus there are two indices~$i,j\in[\frac{1}{\varepsilon^2}+1]$ such that~$H(e_i)\cap H(e_j)\neq\emptyset$.
		W.l.o.g., assume~$i<j$.
		
		Let~$v\in H(e_i)\cap H(e_j)$ and let~$y\in\cP^{r_ir_j}$ such that~$x_iz_{im'}y$ forms a triangle in~$Q_{r_i}$ (such a~$y$ exists due to the choice of~$x_i$ and~$z_{im'}$).
		The definition of~$Q^{r_i}$ implies that there are vertices~$u_1\in\cP^{r_jm}$ and~$u_2\in\cP^{r_jm'}$ such that~$x_iyu_1\in E(\cA^{r_ir_jm})$ and~$z_{im'}yu_2\in E(\cA^{r_ir_jm'})$.
		Since we also have the edges~$x_iz_{im'}v\in E(\cA^{r_im'm})$ and~$x_jz_{jm'}v\in E(\cA^{r_jm'm})$, the indices~$r_i$,~$r_j$,~$m'$, and~$m$ and the vertices~$x_i$,~$z_{im'}$,~$y$,~$x_j$,~$u_1$,~$z_{jm'}$,~$u_2$, and~$v$ form the desired configuration. This is illustrated in Figure \ref{fig:OneVtxGlueRows}.
	\end{proof}
	
	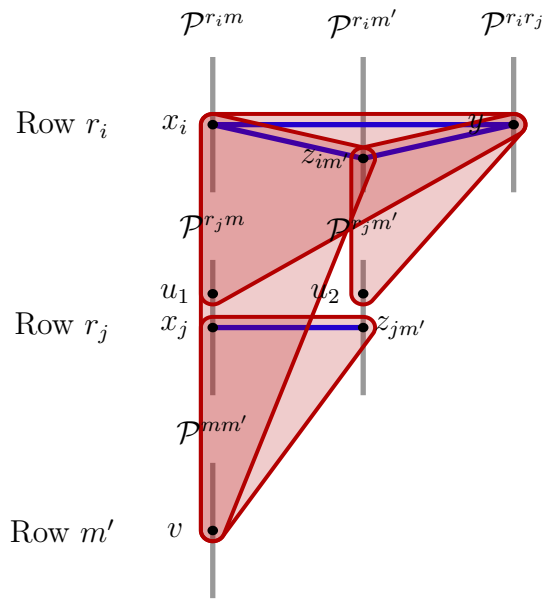
\begin{figure}
		\centering
		\begin{tikzpicture}[xscale=1,yscale=0.9]
			
			\coordinate (x_i) at (0, 10); 
			\coordinate (z_im') at (2, 9.5);
			\coordinate (y) at (4, 10);
			
			\coordinate (u_1) at (0, 7.5);
			\coordinate (u_2) at (2, 7.5);
			\coordinate (x_j) at (0, 7);
			\coordinate (z_jm') at (2, 7);
			
			\coordinate (v) at (0, 4);
			
			\begin{pgfonlayer}{front}
				
				\foreach \i in {x_i, z_im', y, u_1, u_2, x_j, z_jm', v}
				\fill  (\i) circle (2pt);
				
				\node at (-0.5, 10) {$x_i$};
				\node at (1.5, 9.5) {$z_{im'}$};
				\node at (3.5, 10) {$y$};
				
				\node at (-0.5, 7.5) {$u_1$};
				\node at (1.5, 7.5) {$u_2$};
				\node at (-0.5, 7) {$x_j$};
				\node at (2.5, 7) {$z_{jm'}$};
				
				\node at (-0.5, 4) {$v$};
				
				\node at (0, 11.5) {$\mathcal{P}^{r_i m}$};
				\node at (2, 11.5) {$\mathcal{P}^{r_i m'}$};
				\node at (4, 11.5) {$\mathcal{P}^{r_i r_j}$};
				
				\node at (0, 8.5) {$\mathcal{P}^{r_{j} m}$};
				\node at (2, 8.5) {$\mathcal{P}^{r_{j} m'}$};
				
				\node at (0, 5.5) {$\mathcal{P}^{m m'}$};
				
				\node at (-2, 10) {$\text{Row }r_i$};
				\node at (-2, 7) {$\text{Row }r_{j}$};
				\node at (-2, 4) {$\text{Row }m'$};
				
			\end{pgfonlayer}
			\begin{pgfonlayer}{background}
				\draw[black!40!white, line width=2pt] (0,9) -- (0,11);
				\draw[black!40!white, line width=2pt] (2,9) -- (2,11);
				\draw[black!40!white, line width=2pt] (4,9) -- (4,11);
				
				\draw[black!40!white, line width=2pt] (0,6) -- (0,8);
				\draw[black!40!white, line width=2pt] (2,6) -- (2,8);
				
				\draw[black!40!white, line width=2pt] (0,3) -- (0,5);
				
				\draw[blue!60!blue, line width=2pt] (x_i) -- (z_im');
				\draw[blue!60!blue, line width=2pt] (x_i) -- (y);
				\draw[blue!60!blue, line width=2pt] (z_im') -- (y);
				\draw[blue!60!blue, line width=2pt] (x_j) -- (z_jm');
				
			\end{pgfonlayer}
			\qedge{(y)}{(u_2)}{(z_im')}{4.5pt}{1.5pt}{red!70!black}{red!70!black,opacity=0.2};
			\qedge{(y)}{(u_1)}{(x_i)}{4.5pt}{1.5pt}{red!70!black}{red!70!black,opacity=0.2};
			\qedge{(z_im')}{(v)}{(x_i)}{4.5pt}{1.5pt}{red!70!black}{red!70!black,opacity=0.2};
			\qedge{(z_jm')}{(v)}{(x_j)}{4.5pt}{1.5pt}{red!70!black}{red!70!black,opacity=0.2};
			
		\end{tikzpicture}
		\caption{$K_4^{(3)-}$ with one vertex glued embedded in the reduced hypergraph~$\cA$ that we obtain, following the procedure above.}
		\label{fig:OneVtxGlueRows}
	\end{figure}
	
	\subsection*{Acknowledgements}
	The second author thanks Christian Reiher and Mathias Schacht for fruitful discussions and for teaching him~\cite{RRS:18}.
    The second author was partially supported by the Young Scientist Fellowship IBS-R029-Y7.
	
	\begin{bibdiv}
		\begin{biblist}
			
			\bib{BCL:22}{article}{
				author={Balogh, J\'{o}zsef},
				author={Clemen, Felix Christian},
				author={Lidick\'{y}, Bernard},
				title={Hypergraph Tur\'{a}n problems in $\ell_2$-norm},
				conference={
					title={Surveys in combinatorics 2022},
				},
				book={
					series={London Math. Soc. Lecture Note Ser.},
					volume={481},
					publisher={Cambridge Univ. Press, Cambridge},
				},
				date={2022},
				pages={21--63},
				review={\MR{4421399}},
			}
			
			\bib{BR:19}{article}{
				author={Bellmann, Louis},
				author={Reiher, {Chr}istian},
				title={Tur\'{a}n's theorem for the Fano plane},
				journal={Combinatorica},
				volume={39},
				date={2019},
				number={5},
				pages={961--982},
				issn={0209-9683},
				review={\MR{4039597}},
				doi={10.1007/s00493-019-3981-8},
			}
			
			\bib{BPRRS:22}{article}{
				doi = {10.48550/ARXIV.2206.07354},
				
				url = {https://arxiv.org/abs/2206.07354},
				
				author = {Berger, Sören},
				author = {Piga, Simón},
				author = {Reiher, {Chr}istian},
				author = {Rödl, Vojtěch},
				author = {Schacht, Mathias},
				
				keywords = {Combinatorics (math.CO), FOS: Mathematics, FOS: Mathematics, 05C35, 05C65, 05C80},
				
				title = {Turán density of cliques of order five in $3$-uniform hypergraphs with quasirandom links},
				
				publisher = {arXiv},
				
				year = {2022},
				
				copyright = {Creative Commons Attribution 4.0 International}
			}

			\bib{BCKMC:21}{article}{
				doi = {10.48550/ARXIV.2112.01385},
				
				url = {https://arxiv.org/abs/2112.01385},
				
				author = {Bucić, Matija},
				author = {Cooper, Jacob W.},
				author = {Kráľ, Daniel},
				author = {Mohr, Samuel},
				author = {Correia, David Munhá},
				
				keywords = {Combinatorics (math.CO), FOS: Mathematics, FOS: Mathematics},
				
				title = {Uniform Turán density of cycles},
				
				publisher = {arXiv},
				
				year = {2021},
				
				copyright = {arXiv.org perpetual, non-exclusive license},
				note={To appear in Trans. Amer. Math. Soc.},
			}

			\bib{DF:00}{article}{
				author={De Caen, Dominique},
				author={F\"{u}redi, Zolt\'{a}n},
				title={The maximum size of 3-uniform hypergraphs not containing a Fano
					plane},
				journal={J. Combin. Theory Ser. B},
				volume={78},
				date={2000},
				number={2},
				pages={274--276},
				issn={0095-8956},
				review={\MR{1750899}},
				doi={10.1006/jctb.1999.1938},
			}
			
			\bib{E:90}{article}{
				author={Erd\H{o}s, Paul},
				title={Problems and results on graphs and hypergraphs: similarities and
					differences},
				conference={
					title={Mathematics of Ramsey theory},
				},
				book={
					series={Algorithms Combin.},
					volume={5},
					publisher={Springer, Berlin},
				},
				date={1990},
				pages={12--28},
				review={\MR{1083590}},
				doi={10.1007/978-3-642-72905-8\_2},
			}
			
			\bib{ES:66}{article}{
				author={Erd\H{o}s, P.},
				author={Simonovits, M.},
				title={A limit theorem in graph theory},
				journal={Studia Sci. Math. Hungar.},
				volume={1},
				date={1966},
				pages={51--57},
				issn={0081-6906},
				review={\MR{205876}},
			}
			
			\bib{ES:82}{article}{
				author={Erd\H{o}s, P.},
				author={S\'{o}s, Vera T.},
				title={On Ramsey-Tur\'{a}n type theorems for hypergraphs},
				journal={Combinatorica},
				volume={2},
				date={1982},
				number={3},
				pages={289--295},
				issn={0209-9683},
				review={\MR{698654}},
				doi={10.1007/BF02579235},
			}
			
			\bib{ES:46}{article}{
				author={Erd\H{o}s, P.},
				author={Stone, A. H.},
				title={On the structure of linear graphs},
				journal={Bull. Amer. Math. Soc.},
				volume={52},
				date={1946},
				pages={1087--1091},
				issn={0002-9904},
				review={\MR{18807}},
				doi={10.1090/S0002-9904-1946-08715-7},
			}
			
			\bib{FS:05}{article}{
				author={F\"{u}redi, Zolt\'{a}n},
				author={Simonovits, Mikl\'{o}s},
				title={Triple systems not containing a Fano configuration},
				journal={Combin. Probab. Comput.},
				volume={14},
				date={2005},
				number={4},
				pages={467--484},
				issn={0963-5483},
				review={\MR{2160414}},
				doi={10.1017/S0963548305006784},
			}
			
			\bib{GKL:21}{article}{
				title={Hypergraphs with minimum positive uniform Tur\'an density}, 
				author={Garbe, Frederik},
				author={Kr\'{a}l', Daniel},
				author={Lamaison, Ander},
				year={2021},
				eprint={2105.09883},
				archivePrefix={arXiv},
				primaryClass={math.CO},
				note={To appear in Israel J. Math.},
			}
			
			\bib{GKV:16}{article}{
				author={Glebov, Roman},
				author={Kr\'{a}l', Daniel},
				author={Volec, Jan},
				title={A problem of Erd\H{o}s and S\'{o}s on 3-graphs},
				journal={Israel J. Math.},
				volume={211},
				date={2016},
				number={1},
				pages={349--366},
				issn={0021-2172},
				review={\MR{3474967}},
				doi={10.1007/s11856-015-1267-4},
			}
			
			\bib{KNS:64}{article}{
				author={Katona, Gyula},
				author={Nemetz, Tibor},
				author={Simonovits, Mikl\'{o}s},
				title={On a problem of Tur\'{a}n in the theory of graphs},
				language={Hungarian, with English and Russian summaries},
				journal={Mat. Lapok},
				volume={15},
				date={1964},
				pages={228--238},
				issn={0025-519X},
				review={\MR{172263}},
			}
			
			\bib{K:11}{article}{
				author={Keevash, Peter},
				title={Hypergraph Tur\'{a}n problems},
				conference={
					title={Surveys in combinatorics 2011},
				},
				book={
					series={London Math. Soc. Lecture Note Ser.},
					volume={392},
					publisher={Cambridge Univ. Press, Cambridge},
				},
				date={2011},
				pages={83--139},
				review={\MR{2866732}},
			}
			
			\bib{KS:05}{article}{
				author={Keevash, Peter},
				author={Sudakov, Benny},
				title={The Tur\'{a}n number of the Fano plane},
				journal={Combinatorica},
				volume={25},
				date={2005},
				number={5},
				pages={561--574},
				issn={0209-9683},
				review={\MR{2176425}},
				doi={10.1007/s00493-005-0034-2},
			}

            \bib{LLWZ:23}{article}{
                title={Hypergraphs with a quarter uniform Tur\'an density}, 
                author={Li, Hao},
                author={Lin, Hao},
                author={Wang, Guanghui},
                author={Zhou, Wenling},
                year={2023},
                eprint={2305.11749},
                archivePrefix={arXiv},
                primaryClass={math.CO},
            }

            \bib{PSS:23}{article}{
				author={Piga, Sim\'{o}n},
                    author={Sales, Marcelo}, 
                    author={Sch\"{u}lke, Bjarne},
                    title={The codegree Tur\'{a}n density of tight cycles minus one edge},
                    journal={Combinatorics, Probability and Computing},
                    volume={32},
				date={2023},
                    number={6}, 
				pages={881–884},
				doi={10.1017/S0963548323000196},
			}
			
			\bib{R:20}{article}{
				author={Reiher, {Chr}istian},
				title={Extremal problems in uniformly dense hypergraphs},
				journal={European J. Combin.},
				volume={88},
				date={2020},
				pages={103117, 22},
				issn={0195-6698},
				review={\MR{4111729}},
				doi={10.1016/j.ejc.2020.103117},
			}
			
			\bib{RRS:18}{article}{
				author={Reiher, {Chr}istian},
				author={R\"{o}dl, Vojt\v{e}ch},
				author={Schacht, Mathias},
				title={On a Tur\'{a}n problem in weakly quasirandom 3-uniform hypergraphs},
				journal={J. Eur. Math. Soc. (JEMS)},
				volume={20},
				date={2018},
				number={5},
				pages={1139--1159},
				issn={1435-9855},
				review={\MR{3790065}},
				doi={10.4171/JEMS/784},
			}
			
			\bib{RRS:182}{article}{
				author={Reiher, {Chr}istian},
				author={R\"{o}dl, Vojt\v{e}ch},
				author={Schacht, Mathias},
				title={Hypergraphs with vanishing Tur\'{a}n density in uniformly dense
					hypergraphs},
				journal={J. Lond. Math. Soc. (2)},
				volume={97},
				date={2018},
				number={1},
				pages={77--97},
				issn={0024-6107},
				review={\MR{3764068}},
				doi={10.1112/jlms.12095},
			}
			
			\bib{T:41}{article}{
				author={Tur\'{a}n, Paul},
				title={Eine Extremalaufgabe aus der Graphentheorie},
				language={Hungarian, with German summary},
				journal={Mat. Fiz. Lapok},
				volume={48},
				date={1941},
				pages={436--452},
				issn={0302-7317},
				review={\MR{18405}},
			}
			
		\end{biblist}
	\end{bibdiv}
	
\end{document}